\documentclass[a4paper]{amsart}
\usepackage{aliascnt, amssymb, enumerate, enumitem, hyperref, mathtools, bbm}
\usepackage[all]{xy}

\def\today{\number\day\space\ifcase\month\or   January\or February\or
   March\or April\or May\or June\or   July\or August\or September\or
   October\or November\or December\fi\   \number\year}

\theoremstyle{definition}
\newtheorem{lma}{Lemma}[section]

\newaliascnt{thmCt}{lma}

\aliascntresetthe{thmCt}

\newtheorem*{thm*}{Theorem}

\newaliascnt{corCt}{lma}
\newtheorem{cor}[corCt]{Corollary}
\aliascntresetthe{corCt}

\newaliascnt{propCt}{lma}
\newtheorem{prop}[propCt]{Proposition}
\aliascntresetthe{propCt}

\newaliascnt{pgrCt}{lma}

\aliascntresetthe{pgrCt}

\newaliascnt{dfCt}{lma}
\newtheorem{df}[dfCt]{Definition}
\aliascntresetthe{dfCt}

\newaliascnt{remCt}{lma}
\newtheorem{rem}[remCt]{Remark}
\aliascntresetthe{remCt}

\newaliascnt{remsCt}{lma}

\aliascntresetthe{remsCt}

\newaliascnt{egCt}{lma}
\newtheorem{eg}[egCt]{Example}
\aliascntresetthe{egCt}

\newaliascnt{egsCt}{lma}

\aliascntresetthe{egsCt}

\newaliascnt{qstCt}{lma}

\aliascntresetthe{qstCt}

\newaliascnt{pbmCt}{lma}

\aliascntresetthe{pbmCt}

\newaliascnt{notaCt}{lma}

\aliascntresetthe{notaCt}

\newaliascnt{cnjCt}{lma}

\aliascntresetthe{cnjCt}

\newcounter{theoremintro}
\newtheorem{thmintro}[theoremintro]{Proposition}

\newcommand{\ph}{\varphi}

\newcommand{\Z}{{\mathbb{Z}}}

\newcommand{\N}{{\mathbb{N}}}

\newcommand{\ind}{{\mathbbm{1}}}

\newcommand{\ev}{{\mathrm{ev}}}

\newcommand{\andeqn}{\,\,\, {\mbox{and}} \,\,\,}

\newcommand{\ca}{$C^*$-algebra}
\newcommand{\cas}{$C^*$-algebras}

\newcommand{\ov}{\overline}

\title{Duality of partial Rokhlin dimension}

\date{\today}
\thanks{The author was supported by Kungl. Vetenskapsakademiens stiftelser.
}

\author{Jan Gundelach}
\address[Jan Gundelach]{Department of Mathematical Sciences, Chalmers University of
Technology and University of Gothenburg, Gothenburg SE-412 96, Sweden.}
\email[]{jangund@chalmers.se}
\urladdr{www.chalmers.se/en/persons/jangund/}

\begin{document}

\begin{abstract}
    We extend the notion of representability dimension to partial actions and introduce a notion of dual representability dimension for global actions by finite abelian groups. We show that the Rokhlin dimension of a partial action by a finite abelian group agrees with the dual representability dimension of the dual action on the partial crossed product, while the representability dimension of a partial action agrees with the Rokhlin dimension of its dual.
\end{abstract}
\maketitle

\section{Introduction}
Finite group actions on \cas\, with the Rokhlin property have a long history that dates back to the late 1970s. Even before this notion has been established in its modern form, Kishimoto studied instances of Rokhlin actions in \cite{Kis_fpUHF_1978}. The dynamical system of an action with the Rokhlin property is particularly well behaved. For example, the Rokhlin property allows to pass on many properties from the $C^*$-algebra both to the resulting fixed point algebra and to the crossed product. Among others, this includes being simple, a Kirchberg algebra, of real rank zero, or having finite stable rank or nuclear dimension. Furthermore, the Rokhlin property passes to tensor product actions and quotients, and facilitates K-theoretical and Cuntz semigroup computations. This is why the Rokhlin property is used early on for generalization attempts of the classification oriented work of Kirchberg and Phillips to an equivariant framework.\par Among many other results, Izumi characterizes in \cite[Lemma 3.8]{Izu_finiteI_2004} when a global action by a finite abelian group has the Rokhlin property in terms of the dual action. The dual notion is approximate representability allowing to check certain examples of action with the Rokhlin property more easily. The theory of Rokhlin actions ramified in many directions since then. Among others, one branch of generalizations is from finite to compact groups, from separable to not necessarily separable \cas, from the Rokhlin property to Rokhlin dimensions, and from global to partial actions. Following this branch, Izumi's result generalizes accordingly to compact groups in \cite[Theorem VI.4.2]{Gar_thesis_2015} and \cite[Theorem 4.27]{BarSza_sequentially_2016}, and to Rokhlin and representability dimensions in \cite[Theorem 1.14]{GarHirSan_rokhlin_2017}. In this paper, we continue this theme for partial actions in the sense that the Rokhlin dimension of a partial action agrees with the dual representability dimension of its dual. Using terminology from Landstad duality, the dual representability dimension is the representability dimension in \cite[Definition 1.10]{GarHirSan_rokhlin_2017} with an additional compatibility relation for duals of partial actions that is automatic for duals of global actions. 
\begin{thmintro}\label{prop intro dual rep}
    Let $G$ be a finite abelian group, and let $\alpha = ((A_g)_{g\in G},(\alpha_g)_{g\in G})$ be a partial action  on a separable \ca\, $A$. Then $\dim_{\textup{Rok}}(\alpha) = \dim_{\widehat{\textup{rep}}}(\widehat{\alpha})$.
\end{thmintro}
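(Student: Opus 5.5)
We will prove the two inequalities $\dim_{\widehat{\textup{rep}}}(\widehat{\alpha})\le \dim_{\textup{Rok}}(\alpha)$ and $\dim_{\textup{Rok}}(\alpha)\le \dim_{\widehat{\textup{rep}}}(\widehat{\alpha})$ separately, following the strategy of \cite[Theorem 1.14]{GarHirSan_rokhlin_2017} and Izumi's original argument \cite[Lemma 3.8]{Izu_finiteI_2004}. Throughout we work with the central sequence picture: Rokhlin dimension $d$ of the partial action $\alpha$ means there are positive contractions $f^{(j)}_g$, $j=0,\dots,d$, $g\in G$, in $F(A)$ (or in the appropriate relative commutant of $A$ in $A_\infty$) satisfying orthogonality within each colour, $\sum_{j,g} f^{(j)}_g=1$, and the partial equivariance condition $\alpha_h(f^{(j)}_g x)=f^{(j)}_{hg}\alpha_h(x)$ for $x\in A_{h^{-1}}$. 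On the dual side, $A\rtimes_\alpha G$ is generated by $A$ and the elements $a\delta_g$ with $a\in A_g$, and $\widehat{\alpha}_\chi(a\delta_g)=\chi(g)a\delta_g$. The dual representability dimension asks for $d+1$ families of unitary-like contractions $u^{(j)}_\chi$ in the central sequence algebra of $A\rtimes_\alpha G$, implementing $\widehat{\alpha}$ approximately, fixed by $\widehat{\alpha}$, together with the extra compatibility relation (the Landstad-type condition) that they commute appropriately with the ideals $A_g\delta_g$.

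For $\le$: given Rokhlin towers $f^{(j)}_g$, define via Fourier transform
\[ u^{(j)}_\chi := \sum_{g\in G}\overline{\chi(g)}\, f^{(j)}_g, \]
viewed in the central sequence algebra of the crossed product through the canonical inclusion $A\subseteq A\rtimes_\alpha G$. Orthogonality within a colour gives that $u^{(j)}_\chi$ is a normal element with $u^{(j)*}_\chi u^{(j)}_\chi=\sum_g f^{(j)}_g=:e^{(j)}$, and $\chi\mapsto u^{(j)}_\chi$ is multiplicative on $e^{(j)}$. Since the $f$'s lie in $A_\infty$, they are $\widehat{\alpha}$-fixed. The partial equivariance relation yields $u^{(j)}_\chi (a\delta_g) = \chi(g)\,(a\delta_g) u^{(j)}_\chi$, i.e. $\sum_j e^{(j)}\cdot$-weighted implementation of $\widehat{\alpha}_\chi$ by $u^{(j)}_\chi$, and commutation with $A$ comes from centrality. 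Then we check the compatibility relation, which should be literally the partial equivariance rewritten.

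For $\ge$: given the representing elements $u^{(j)}_\chi$, which are $\widehat{\alpha}$-invariant and hence (after reindexing via a conditional expectation argument / Landstad duality) lie in the fixed point algebra, i.e. essentially in the central sequence algebra of $A$, we invert the Fourier transform: $f^{(j)}_g:=\frac{1}{|G|}\sum_\chi \chi(g)u^{(j)}_\chi$. Multiplicativity in $\chi$ makes these orthogonal positive contractions summing to $e^{(j)}$, and the implementation relation translates back into partial equivariance. The main obstacle is this direction: showing the $u^{(j)}_\chi$ genuinely land in (a sequence algebra of) $A$ rather than only in $(A\rtimes_\alpha G)^{\widehat{\alpha}}$ with central behavior relative to the crossed product, and that centrality in the crossed product yields exactly the partial-action equivariance on $A_{h^{-1}}$ including the domain issues. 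Here the additional compatibility relation in the definition of $\dim_{\widehat{\textup{rep}}}$ is exactly what is needed; I would use it together with $(A\rtimes_\alpha G)^{\widehat{\alpha}}=A$ and separability (reindexation/Kirchberg's $\varepsilon$-test) to replace approximate relations by exact ones in $F(A)$.
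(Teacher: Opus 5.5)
There is a genuine gap, and it sits exactly where the partial case differs from the global one. Your Fourier transform between towers and order zero representations is the right mechanism, but your central-sequence description of $\dim_{\textup{Rok}}(\alpha)\le d$ omits condition (1) of \autoref{part Rok cen seq version}. That condition reads $f^{(j)}_gA_\infty\cup A_\infty f^{(j)}_g\subseteq (A_g)_\infty$, and it is what survives in the sequence algebra of the requirement that the $g$-th tower lie in $A_g$. It does not follow from orthogonality, centrality, $\sum_{j,g}f^{(j)}_g=1$ and partial equivariance. Take \autoref{repr dual repr example} and the constant central projections $f_0=(0,\ind_{(0,1)})$ and $f_1=(\ind_{(0,1)},0)$ in $M(A)$. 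Since $\alpha_1$ swaps the coordinates, one checks $\alpha_1(f_kx)=f_{k+1}\alpha_1(x)$ for $x\in A_1$ directly, so all of your conditions hold. Yet $f_1A\not\subseteq A_1$, and the example shows that $\widehat{\alpha}$ is not dually approximately representable. So with your list of conditions the asserted equality is false, and your plan to verify the dual compatibility relation as ``the partial equivariance rewritten'' cannot work. In the converse direction you likewise never derive that $\frac{1}{|G|}\sum_\chi\chi(g)u^{(j)}_\chi$ is supported in $A_g$. You have also misread that relation. It is not a commutation condition with $A_g\delta_g$. It is condition (1) of \autoref{drepr dim}: the $g$-th Fourier coefficient $\sum_{\chi}\overline{\chi(g)}v^{(j)}_\chi$ multiplies the fixed point algebra $Au_e$ into the spectral range ideal $D_g=A_gu_e$.

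The fix is the paper's bookkeeping. Put $v^{(j)}_\chi:=\sum_g\chi(g)f^{(j)}_g$, using $C(G)\cong C^*(\widehat{G})$ and \autoref{order zero repr map prop}. Then $\sum_\chi\overline{\chi(g)}v^{(j)}_\chi=|G|f^{(j)}_g$. Identify $M(A\rtimes_\alpha G)_\infty^{\widehat{\alpha}^\infty}$ with $M(A)_\infty$ and $D_g$ with $A_g$ via \autoref{fix rem} and \autoref{fp of dual action}; this identification, not the compatibility relation, is what places the $v_\chi$ in $M(A)_\infty$. After it, condition (1) of \autoref{drepr dim} is verbatim condition (1) of \autoref{part Rok cen seq version}, with no equivariance involved. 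The implementation relation $v_\chi b=\widehat{\alpha}_\chi(b)v_\chi$ then splits in two. For $b\in Au_e$ it is centrality. For $b=xu_{g^{-1}}$ with $x\in A_{g^{-1}}$ it becomes partial equivariance after left multiplication by $u_g$, which is a bijection $(A_{g^{-1}})_\infty u_{g^{-1}}\to(A_g)_\infty$ by \autoref{u is partial implementing rep}. This also settles the domain issues you worried about. Both sides are already exact statements in sequence algebras, so no $\varepsilon$-test or reindexing is needed.

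Smaller slips:
\begin{enumerate}
\item With your convention $u_\chi=\sum_g\overline{\chi(g)}f_g$ one gets $u_\chi(au_g)=\overline{\chi(g)}(au_g)u_\chi$. So $u_\chi$ implements $\widehat{\alpha}_{\overline{\chi}}$, and its $g$-th Fourier coefficient is $|G|f_{g^{-1}}$.
\item $u_\chi^*u_\chi=\sum_gf_g^2=(e^{(j)})^2$, not $e^{(j)}$.
\item The $u_\chi$ are not central in $A\rtimes_\alpha G$; they commute with $Au_e$ but twist $A_gu_g$ by a scalar.
\end{enumerate}
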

Similarly, we generalize the representability dimension to partial actions leading to a complementary characterization: The representability dimension of a partial action by a finite abelian group agrees with the global Rokhlin dimension of the dual action.

\begin{thmintro}\label{prop intro dual rok}
    Let $G$ be a finite abelian group, and let $\alpha = ((A_g)_{g\in G},(\alpha_g)_{g\in G})$ be a partial action on a separable \ca\, $A$. Then $\dim_{\textup{Rok}}(\widehat{\alpha}) = \dim_{\textup{rep}}(\alpha)$.
\end{thmintro}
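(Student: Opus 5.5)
We prove the two inequalities $\dim_{\textup{Rok}}(\widehat{\alpha}) \le \dim_{\textup{rep}}(\alpha)$ and $\dim_{\textup{rep}}(\alpha)\le \dim_{\textup{Rok}}(\widehat{\alpha})$ separately. Both go through the central sequence algebra of the partial crossed product $B = A\rtimes_\alpha G$, and the main tool is the same identification used for global actions in \cite[Theorem 1.14]{GarHirSan_rokhlin_2017}. Since $G$ is finite, $B$ is spanned by elements $a\delta_g$ with $a\in A_g$, and $A$ embeds as the span of the $a\delta_e$. The dual action $\widehat{\alpha}$ of $\widehat{G}$ is $\widehat{\alpha}_\chi(a\delta_g) = \chi(g)a\delta_g$. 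For the global Rokhlin dimension of $\widehat{\alpha}$, a dimension $\le d$ means there are positive contractions $f^{(j)}_\chi\in F_\infty(B)$, for $j=0,\dots,d$ and $\chi\in\widehat{G}$, with the following properties:
\begin{itemize}
\item they are orthogonal in $\chi$ for fixed $j$;
\item they satisfy $\sum_{j,\chi} f^{(j)}_\chi = 1$;
\item they are permuted by $\widehat{\alpha}$, i.e.\ $\widehat{\alpha}_\chi(f^{(j)}_\psi) = f^{(j)}_{\chi\psi}$.
\end{itemize}
The representability dimension of the partial action is defined (extending \cite[Definition 1.10]{GarHirSan_rokhlin_2017}) via contractions $v^{(j)}_g$, which are asymptotically central multipliers or central sequence elements living over the ideals $A_g$ and implement $\alpha_g$. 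These come with positive contractions $x^{(j)}$ such that $\sum_j x^{(j)} = 1$, the relation $v_g^{(j)} a = \alpha_g(a) v^{(j)}_g$ holds on $A_{g^{-1}}$, and $g\mapsto v^{(j)}_g$ is multiplicative up to $x^{(j)}$.

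The core step is a discrete Fourier transform. Given Rokhlin towers $f^{(j)}_\chi$ for $\widehat{\alpha}$, set $e^{(j)}=\sum_\chi f^{(j)}_\chi$ and define, for $g\in G$,
\[ w^{(j)}_g = \sum_{\chi\in\widehat{G}} \chi(g)\, f^{(j)}_\chi\cdot (\text{suitable central translate}). \]
This follows the global proof. A cleaner route is to use the Fourier coefficients: for each $g$, the map $x\mapsto \frac{1}{|G|}\sum_\chi \overline{\chi(g)}\widehat{\alpha}_\chi(x)$ projects $B$ onto the spectral subspace $A_g\delta_g$. Applying this projection to the Rokhlin elements, or to their square roots $(f^{(j)}_{\chi})^{1/2}$ combined via $\sum_\chi \chi(g)^{-1}\cdots$, produces central-sequence elements of degree $g$, i.e.\ living in $(A_g\delta_g)_\infty\cap B'$. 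An element $y\delta_g$ of degree $g$ that commutes with $A\subseteq B$ gives $y\,\alpha_g(a) = a y$ on the appropriate ideals. This is exactly the representability relation, with $y$ sequences in $A_g$. Orthogonality of the towers translates into the approximate multiplicativity $v_gv_h \approx x v_{gh}$, and $\sum_j e^{(j)}=1$ gives $\sum_j x^{(j)}=1$. Conversely, given representing elements $v_g^{(j)}$, the candidate towers are
\[ f^{(j)}_\chi = \frac{1}{|G|}\sum_{g}\chi(g)\, v^{(j)}_g\delta_g. \]
These are positive by a Bochner-type argument, since $g\mapsto v_g\delta_g$ is approximately a positive-definite unitary representation. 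They are permuted by $\widehat{\alpha}$ by construction, are central in $B$ because $v_g$ implements $\alpha_g$ and commutes appropriately with the degree-$h$ parts, and are orthogonal in $\chi$ by character orthogonality, since $g\mapsto v_g$ is multiplicative.

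The main obstacle is that in the partial setting the elements $v_g$ only live over $A_g$ (or in its multiplier/central sequence version), so products $v_gv_h$ lie in $A_g\cap A_{gh}$, not $A_{gh}$. Moreover, the unit of $F_\infty(B)$ has to be related to $F_\infty(A)$, since $A$ need not be unital in $B$ in a nondegenerate way for each degree. I would handle this by working with the relative central sequence algebra $F(A,B)=(A_\infty\cap A')/\mathrm{Ann}$ and its $B$-version. The first step is to prove a lemma that the inclusion $A\subseteq B$ is nondegenerate, so that $F(A,B)$ and $F_\infty(B)$ are compatible. The second step is to show that $A_g\delta_g$ is a Hilbert $A$-bimodule, so that degree-$g$ central elements are determined by their products with $A_{g^{-1}}$. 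The multiplicativity defect is then absorbed into the definition of the partial representability dimension, which is presumably designed precisely so that this works. The remaining continuous functional calculus estimates are routine.

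Finally, separability of $A$ is used to replace approximate relations by exact ones in $F_\infty$ via a reindexing/diagonal argument.
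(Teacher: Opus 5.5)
Your first step matches the paper, which works in $M(B)_\infty\cap B'$ rather than $F_\infty(B)$; that difference is inessential. Fourier-transforming the towers into $w^{(j)}_g=\sum_{\chi}\ov{\chi(g)}f^{(j)}_\chi$ turns $\dim_{\textup{Rok}}(\widehat{\alpha})\le d$ into the existence of order zero representations $w^{(j)}$ of $G$ into the central sequence algebra of $A\rtimes_\alpha G$, with $w^{(j)}_g$ of degree $g$ and $\sum_j w^{(j)}_e=1$. Each $w^{(j)}_g$ can then be written as $y\,u_g$ with $y$ supported in $A_g$.

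The gap is in the second step: the dictionary between such $w$ and the representability data. This is where all the content of the statement lies, and you leave it at ``presumably designed precisely so that this works''. Besides support, implementation, multiplicativity and $\sum_j x^{(j)}_e\approx 1$, \autoref{ar dim eps version} (equivalently \autoref{ar cen seq version}) contains the invariance condition (iii). For abelian $G$ this reads $\alpha_g(ax_h)\approx\alpha_g(a)x_h$, i.e.\ the $v^{(j)}_h$ lie in the partial fixed point algebra of $\alpha^\infty$. Describing the $v_g$ as ``asymptotically central'' is wrong: they implement $\alpha_g$, and what they are is $\alpha$-invariant. Your argument never produces this condition. From $yu_g\in(A\rtimes_\alpha G)'$ you only use commutation with $Au_e$, which gives implementation, $y\alpha_g(a)=ay$. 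Invariance is exactly what commutation with the other graded pieces $A_hu_h$, $h\neq e$, encodes, and you never examine those.

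This omission breaks your other steps too, because multiplication and involution in the crossed product are twisted: $(yu_g)(zu_h)=\alpha_g(\alpha_{g^{-1}}(y)z)u_{gh}$ and $(yu_g)^*=\alpha_{g^{-1}}(y^*)u_{g^{-1}}$. So $w_gw_h=w_ew_{gh}$ and $w_g^*=w_{g^{-1}}$ do not by themselves give the order zero relations for the coefficients. Your claim that ``orthogonality of the towers translates into $v_gv_h\approx xv_{gh}$'' is therefore unjustified as stated. The paper closes this gap with invariance, which gives $\alpha^\infty_g(\alpha^\infty_{g^{-1}}(v_{g^{-1}})v_{h^{-1}})=v_{g^{-1}}v_{h^{-1}}$ and identifies $\alpha^\infty_{g^{-1}}(v_{g^{-1}}^*)u_{g^{-1}}$ with $v_gu_{g^{-1}}$.

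The same problem affects your converse. Positivity and orthogonality of $\frac{1}{|G|}\sum_g\chi(g)w_g$ require $g\mapsto w_g$ to be an order zero representation inside $A\rtimes_\alpha G$, and centrality requires commutation with each $A_hu_h$. Both hinge on invariance, which your phrase ``commutes appropriately with the degree-$h$ parts'' leaves unidentified.

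For the same reason, the ``multiplicativity defect'' you single out is not the real obstacle. In the sequence algebra the relations hold exactly, and the support condition $v_gA_\infty\cup A_\infty v_g\subseteq(A_{g^{-1}})_\infty$ is part of the definition; the twist is what needs handling.

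Finally, fix the indices. Your own computation shows that a central $yu_g$ has $y$ implementing $\alpha_{g^{-1}}$. So the correspondence is $w_g=v_{g^{-1}}u_g$ with $v_{g^{-1}}$ supported in $A_g$, not $v_g\delta_g$.
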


\section{Partial Rokhlin and representability dimensions}\label{partial Rok section}
In this section, we recall the definition of Rokhlin dimension for partial actions and introduce the representability dimension for partial actions. We begin by recalling the definitions of a partial action and its fixed point algebra; see also \cite[Section 6]{Exe_mono_2017} and \cite[Definition 4.1]{AbaGefGar_decomposable_2021}.

\begin{df}\label{partial action}
    Let $A$ be a \ca\, and let $G$ be a group. A \textit{partial action} is a collection $\alpha = ((A_g)_{g\in G},(\alpha_g)_{g\in G})$ of ideals $A_g \lhd A$ and isomorphisms $\alpha_g\colon A_{g^{-1}}\to A_g$ for all $g\in G$ with $\alpha_e = \textup{id}_A$ and such that, for all $g,\,h\in G$, the map $\alpha_{gh}$ extends $\alpha_g\circ \alpha_h$ whenever the composition is well-defined. The \textit{fixed point algebra} of $\alpha$ is given by \[A^\alpha := \{ a\in A\colon \alpha_g(a_{g^{-1}}a) = \alpha_g(a_{g^{-1}})a\,\, \textup{for all}\,\, g\in G\,\, \textup{and all}\,\,a_{g^{-1}}\in A_{g^{-1}}\}.\] We say that a partial action is \textit{global} if $A_g=A$ for all $g\in G$.
\end{df}

\begin{eg}\label{restricted to I example}
    For any global action $\alpha\colon G \curvearrowright A$ and any ideal $I\lhd A$, restricting to the dynamics inside $I$, we obtain a partial action $\alpha\vert_I$ by setting $I_g:= I\cap \alpha_g(I)$ and $(\alpha\vert_I)_g := \alpha_g\vert_{I_{g^{-1}}}$ for $g\in G$. Partial actions of this form are called \textit{globalizable} since they admit a global enveloping action on $A$.
\end{eg}
We recall the definition of Rokhlin dimension from \cite[Definition 2.1]{AbaGefGar_parRok_2022}.

\begin{df}\label{part Rok unital epsilon version}
    Let $\alpha = ((A_g)_{g\in G},(\alpha_g)_{g\in G})$ be a partial action of a finite group $G$ on a \ca\, A. For $d\in \N$, we say that $\alpha$ has \textit{Rokhlin dimension at most d} if for every $\varepsilon >0$ and finite subset $F\subseteq A$, there exist tuples $(f_g^{(j)})_{g\in G}\in \prod_{g\in G}A_g$ of positive contractions called \textit{Rokhlin towers} for $(F,\varepsilon)$ for $j=0,\cdots,d$, satisfying:
    \begin{enumerate}[label=(\roman*)]
        \item $\|f_g^{(j)}f_h^{(j)}a\|<\varepsilon$ for all $g,h\in G$ with $g\neq h$, $a\in F$, and $j=0,\cdots, d$;
        \item $\|(f_g^{(j)}b -bf_g^{(j)})a\|< \varepsilon$ for all $g\in G$, $a,b\in F$, and $j=0,\cdots, d$;
        \item $\|(\sum_{j=0}^d \sum_{g\in G}f_g^{(j)})a -a\| < \varepsilon$ for all $a\in F$;
        \item $\|(\alpha_g(f_h^{(j)}x) - f_{gh}^{(j)}\alpha_g(x))a\| < \varepsilon$ for all $g,h\in G$, $a\in F$, $x\in A_{g^{-1}}\cap F$, and $j=0,\cdots,d$.
    \end{enumerate}
    If existent, the minimal such $d$ is called the \textit{Rokhlin dimension} of $\alpha$ and is denoted by $\dim_{\textup{Rok}}(\alpha)$. If $\dim_{\textup{Rok}}(\alpha)= 0$, we say that $\alpha$ has the \textit{Rokhlin property}.
\end{df}
We aim for a reformulation of \autoref{part Rok unital epsilon version} that is more convenient for our purposes. Recall from \cite{WinZac_completely_2009} that a completely positive contractive map $\ph\colon C\to A$ between \cas\, is said to have \textit{order zero} if $\ph(x)\ph(y)=0$ whenever $x,y\in C_+$ satisfy $xy=0$. To rephrase the conditions on Rokhlin towers in \autoref{part Rok unital epsilon version} in terms of completely positive contractive order zero maps, we introduce the central sequence algebra.

\begin{df}\label{seq alg}
    Let $A$ be a \ca, let $l^\infty(\N,A)$ be the algebra of norm-bounded $A$-valued sequences with entry-wise operations, and let $c_0(\N,A)$ be the ideal of sequences that converge to zero in norm. We define the \textit{sequence algebra} as $A_\infty := l^\infty(\N,A)/c_0(\N,A)$. We identify $A$ with the equivalence classes of constant sequences and define the \textit{central sequence algebra} as its relative commutant $A_\infty \cap A'$. We further define the \textit{multiplier algebra} of $A$ as $M(A):= \{b\in A^{**}\colon bA\cup Ab\subseteq A\}$ and say that $M(A)$ is the \textit{idealizer} of $A$ in $A^{**}$.\par Let $\alpha = ((A_g)_{g\in G},(\alpha_g)_{g\in G})$ be a partial action by a group $G$. Since $A$ is an ideal in $M(A)$, we can regard the partial action $\alpha$ as a partial action on $M(A)$ as well. Applying $\alpha$ entry-wise, in turn, induces a partial action on $A_\infty$ (or $M(A)_\infty$ respectively) that we denote by \[\alpha^\infty:=(((A_g)_\infty)_{g\in G},(\alpha^\infty_{g})_{g\in G}).\] 
\end{df}

\begin{rem}\label{fix rem}
    Since \cas\, admit approximate identities, the fixed point algebra for a global action $\alpha\colon G\curvearrowright A$ of a group $G$ on a \ca\, $A$ simplifies to
    \[A^\alpha := \{ a\in A\colon \alpha_g(a) = a\,\, \textup{for all}\,\,g\in G\}.\] For a global action $\alpha$ of a finite group, we can further identify $(A_\infty)^{\alpha^\infty}$ and $(A^\alpha)_\infty$ using the observation that, for $x\in (A_\infty)^{\alpha^\infty}$, any representing sequence $(x_n)_n$ satisfies \[\frac{1}{|G|}\sum_{h\in G} \alpha_h(x_n) -x_n \to 0\] as $n\to \infty$. Thus, without loss of generality, $x$ can be represented by an $A^\alpha$-valued sequence, namely $(\frac{1}{|G|}\sum_{h\in G} \alpha_h(x_n))_n$.
\end{rem}
The following proposition is the analogue of \cite[Definition 1.3]{GarHirSan_rokhlin_2017} for partial actions.

\begin{prop}\label{part Rok cen seq version}
    Let $\alpha = ((A_g)_{g\in G},(\alpha_g)_{g\in G})$ be a partial action of a finite group $G$ on a separable \ca\, $A$. Let $\texttt{Lt}\colon G\curvearrowright C(G)$ be the canonical action by left translation. For $d\in \N$, the inequality $\dim_{\textup{Rok}}(\alpha)\leq d$ is equivalent to the existence of completely positive contractive order zero maps $\ph^{(j)}\colon C(G) \to M(A)_\infty\cap A'$, for $j=0,\cdots,d$, that satisfy
    \begin{enumerate}
        \item $\ph^{(j)}(\delta_g)A_\infty \cup A_\infty \ph^{(j)}(\delta_g)\subseteq (A_g)_\infty$ for all $g\in G$, and $j=0,\cdots,d$;
        \item $\sum_{j=0}^d \ph^{(j)}(\ind_G) = 1$;
        \item $\alpha^\infty_g(\ph^{(j)}(f)x) = \ph^{(j)}(\texttt{Lt}_g(f))\alpha_g(x)$ for all $f\in C(G),\, g\in G,\,x\in A_{g^{-1}}$, and $j=0,\cdots,d$.
    \end{enumerate}
\end{prop}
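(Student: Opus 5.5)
We prove the two implications separately. The direction from the central sequence formulation to \autoref{part Rok unital epsilon version} is a routine lifting argument. The direction from the $\varepsilon$-towers to the maps $\ph^{(j)}$ is a diagonal-sequence argument, followed by a correction step to turn approximate relations into exact identities.

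\emph{From the maps to Rokhlin towers.} Assume the maps $\ph^{(j)}$ exist. Fix $\varepsilon>0$ and a finite set $F\subseteq A$. Since $C(G)$ is finite dimensional and $\ph^{(j)}$ is a completely positive contractive order zero map, the elements $\ph^{(j)}(\delta_g)$, $g\in G$, are pairwise orthogonal commuting positive contractions. We first lift them to positive contractive sequences $(y^{(j)}_{g,n})_n$ in $M(A)$. To place the towers in $A_g$, we use (1): for every $a\in A$ we have $\ph^{(j)}(\delta_g)a\in (A_g)_\infty$. We therefore take $f^{(j)}_{g,n}:=e_g y^{(j)}_{g,n}e_g$, where $e_g$ is an element of an approximate unit of $A_g$ chosen so that $e_g$ almost fixes $y^{(j)}_{g,n}a$ for all $a\in F$. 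Alternatively we can take $e_g$ from a quasicentral approximate unit of $A_g$, so that it also almost commutes with $F\cup\{y^{(j)}_{g,n}\}$. Conditions (i)--(iv) of \autoref{part Rok unital epsilon version} then follow from the identities $\ph^{(j)}(\delta_g)\ph^{(j)}(\delta_h)=0$ for $g\neq h$, from centrality, from (2), and from (3) with $f=\delta_h$. We read each identity at a sufficiently large index $n$, since every identity in $A_\infty$ becomes an $\varepsilon$-estimate for large $n$, and $F$ is finite.

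\emph{From Rokhlin towers to the maps.} Using separability, choose an increasing sequence of finite sets $F_n$ with dense union. Each $F_n$ should also contain elements of countable approximate units of $A$ and of each $A_g$. Take towers $(f^{(j),n}_g)_g$ for $(F_n,1/n)$, and set $x^{(j)}_g:=[(f^{(j),n}_g)_n]\in (A_g)_\infty\subseteq M(A)_\infty$. Conditions (i)--(iv), together with density and the approximate units, give exact identities only after multiplication by elements $a\in A$:
\[ x_g^{(j)}x_h^{(j)}a=0 \ (g\neq h),\quad [x_g^{(j)},b]a=0,\quad \Big(\sum_{j,g}x^{(j)}_g\Big)a=a,\quad \alpha^\infty_g(x^{(j)}_hy)a=x^{(j)}_{gh}\alpha_g(y)a. \]
In other words, the relations hold exactly modulo the annihilator $\mathrm{Ann}(A)$ of $A$ in $M(A)_\infty$. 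Since $A$ is selfadjoint, we obtain the corresponding relations with $a$ on the left as well. Consequently the elements $x^{(j)}_g$ define completely positive contractive order zero maps $C(G)\to (M(A)_\infty\cap A')/\mathrm{Ann}(A)$ that satisfy (1)--(3) there.

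\emph{Main obstacle: lifting to exact maps.} The key step is to lift these maps to maps into $M(A)_\infty\cap A'$ for which the relations hold on the nose, including $\sum_j\ph^{(j)}(\ind_G)=1$ exactly. For the order zero and orthogonality relations, I would use projectivity of the cone $C_0((0,1])\otimes C(G)$. This gives an order zero lift, which can be chosen inside $M(A)_\infty\cap A'$ by a reindexation argument in the style of Kirchberg's $\varepsilon$-test. The test would be run over the countably many conditions coming from the $F_n$. For the identity $\sum=1$ and for equivariance (3), I would first try a further reindexation, combined with replacing $f^{(j),n}_g$ by $f^{(j),n}_g$ compressed by a quasicentral approximate unit $u_n$ of $A$. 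The hope is that the remaining defect $1-\sum_{j,g}x^{(j)}_g$ then lies in $\mathrm{Ann}(A)$ and can be absorbed into the multiplier part, because $M(A)_\infty\cap A'$ only sees the action on $A$.

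If this direct correction fails, the fallback is to use that $\alpha$ extends to $M(A)$. We would then average the defect $1-\sum_{j,g}x^{(j)}_g$ over $G$ via $\alpha^\infty$ and distribute it evenly among the $\delta_g$ of the tower $j=0$. Here, to preserve orthogonality, we would first apply functional calculus to the lifted orthogonal family, in the manner that projections are lifted from quotients. This correction step is where I expect most of the work.
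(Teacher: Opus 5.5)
Your direction from the maps to the towers is essentially the paper's argument. The paper lifts the $\ph^{(j)}(\delta_g)$, uses (1) to put the lifts into $A_g$, and reads the identities off at a large index. Your compression by a quasicentral approximate unit of $A_g$ makes precise the paper's remark that the lifts ``can be arranged'' to lie in $A_g$.

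The gap is in the other direction. You rightly notice that the diagonal elements $x^{(j)}_g=[(f^{(j),n}_g)_n]\in (A_g)_\infty$ satisfy some relations only after multiplication by elements of $A$. Condition (1) is automatic for them. Centrality and (3) in fact hold on the nose: factor $b=b'b''$ and $y=y'y''$ by Cohen's theorem and use (ii), (iv) and their adjoints. Orthogonality and (2), however, genuinely fail. If $A$ is non-unital, $\sum_{j,g}x^{(j)}_g$ lies in $A_\infty$, so it has distance at least $1$ from $1$ in $M(A)_\infty$.

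Your ``main obstacle'' paragraph is supposed to remove this defect, but it contains no argument, and the devices it lists do not work:
\begin{itemize}
\item A partial action does not extend to $M(A)$. The map $\alpha_g$ is defined only on $A_{g^{-1}}$; the paper merely views the same ideals inside $M(A)$. So $D=1-\sum_{j,g}x^{(j)}_g$ cannot be averaged by $\alpha^\infty$.
\item Adding multiples of $D$ to the $x^{(0)}_g$ with $g\neq e$ breaks (1) in general. $D$ kills constant sequences but not sequences escaping to infinity; for $A=C_0(X)$ and $A_g=C_0(U)$ with $X\setminus U$ non-compact one gets $DA_\infty\not\subseteq (A_g)_\infty$. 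It also breaks exact orthogonality, since $Dx^{(0)}_h\neq 0$ in general.
\item Projectivity of the cone gives exactly orthogonal lifts along $M(A)_\infty\cap A'\to (M(A)_\infty\cap A')/\mathrm{Ann}(A)$. It gives no control over $\sum_j\ph^{(j)}(\ind_G)=1$. Nor does it control (1), which is not invariant under perturbation by $\mathrm{Ann}(A)$ because it involves all of $A_\infty$, not just $A$.
\end{itemize}
So the step that would produce multiplier-valued towers forming an exact partition of unity, also at infinity and compatibly with (1), is missing.

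For comparison, the paper makes no such correction. It sets $\ph^{(j)}(\delta_g):=[(f^{(j)}_{g,n})_n]$ and derives the order zero property and (2) from (i) and (iii) ``using strict convergence in $M(A)$''. That is exactly the strict level, modulo $\mathrm{Ann}(A)$, that your second paragraph already reaches. If you read the order zero relation and (2) in that strict sense, your argument is complete at that point and coincides with the paper's, and you should drop the correction step. As written, though, you claim the identities hold exactly in $M(A)_\infty$, and the part of your proposal meant to justify this does not go through.
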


\begin{proof}
    If $\dim_{\textup{Rok}}(\alpha)\leq d$, then by separability we may choose an increasing sequence of finite sets $F_n\subseteq A$ with dense union in $A$ and Rokhlin towers $f_{g,n}^{(j)}\in A_g\subseteq M(A)$ for $(F_n, 2^{-n})$. Since these elements are positive contractions, the induced map $\ph^{(j)}\colon C(G)\to M(A)_\infty$ given by $\ph^{(j)}(\delta_g):= [(f_{g,n}^{(j)})_{n\in \N}]$ for $g\in G$ is a completely positive contractive map satisfying $(1)$. Now, using strict convergence in $M(A)$, we have that (i) in \autoref{part Rok unital epsilon version} implies that $\ph^{(j)}$ is an order zero map, (ii) that it takes values in $A'$, (iii) implies $(2)$, and finally (iv) implies $(3)$ by linearity, density and continuity.\par Conversely, given completely positive contractive order zero maps $\ph^{(j)}\colon C(G) \to A_\infty\cap A'$ as above, the Choi-Effros lifting theorem in \cite{ChoEff_lifting_1976} allows to find completely positive contractive lifts $\Phi^{(j)}\colon C(G) \to l^\infty(\N, M(A))$ such that, for all $g\in G$, the sequence $(f_{g,n}^{(j)})_{n\in \N}:=\Phi^{(j)}(\delta_g)$ can be arranged to consist of positive contractions in $A_g$ because of $(1)$. For given $(F,\varepsilon)$, using the order zero property, the commutant image, and the properties $(2)$ and $(3)$ of $\ph^{(j)}$ in the sequence algebra, eventually there is an $n\in \N$ such that these $f_{g,n}^{(j)}$ satisfy the corresponding strict convergence properties (i) to (iv) in \autoref{part Rok unital epsilon version}. This yields Rokhlin towers for $(F,\varepsilon)$.
\end{proof}

\begin{rem}\label{idealizer rem}
    Given that $b\in (A^{**})_\infty$ is in $M(A)_\infty$ if and only if $bA_\infty\cup A_\infty b\subseteq A_\infty$, condition $(1)$ in \autoref{part Rok cen seq version} reads as a relative idealizer version that for all $g\in G$ and $j=0,\cdots,d$, the set $\ph^{(j)}(\delta_g)A_\infty\cup A_\infty \ph^{(j)}(\delta_g)$ is not only contained in $A_\infty$, but in $(A_g)_\infty$ already. In particular, $(1)$ is automatic for global actions.
\end{rem}
By introducing a similar idealizer condition as in the passage from global to partial Rokhlin dimension, we continue to generalize the complementary dimension notion of representability dimension from global actions, as defined in \cite[Definition 1.10]{GarHirSan_rokhlin_2017}, to partial actions.

\begin{df}\label{ar dim eps version}
    Let $\beta = ((B_g)_{g\in G},(\beta_g)_{g\in G})$ be a partial action of a finite group $G$ on a \ca\, $B$. For $d\in \N$, we say that $\beta$ is \textit{approximately representable with d colors} if for every $\varepsilon >0$ and finite subset $F\subseteq B$, there exist contractions $x_g^{(j)}\in B_{g^{-1}}$ for $g\in G$ and $j=0,\cdots,d$ with $x_e^{(j)}\in B_+$ satisfying:
    \begin{enumerate}[label=(\roman*)]
        \item $\|((x_g^{(j)})^*x_g^{(j)} - x_g^{(j)}(x_g^{(j)})^*)b\|<\varepsilon$ for all $b\in F$;
        \item $\|(x_g^{(j)}x_h^{(j)} -x_{e}^{(j)}x_{gh}^{(j)})b\|< \varepsilon$ for all $g,h\in G$, and $b\in F$;
        \item $\|(\beta_g(ax_h^{(j)}) - \beta_g(a)x_{ghg^{-1}}^{(j)})b\| < \varepsilon$ for all $g,h\in G$, $a\in B_{g^{-1}}\cap F$, and $b\in F$;
        \item $\|(\sum_{j=0}^d x_e^{(j)})b-b\| < \varepsilon$ for all $b\in F$;
        \item $\|x_g^{(j)}b - \beta_g(b)x_g^{(j)}\| < \varepsilon$ for all $b\in B_{g^{-1}}\cap F$.
    \end{enumerate}
    If existent, the minimal such $d$ is called the \textit{representability dimension} of $\beta$ and is denoted by $\dim_{\textup{rep}}(\beta)$. If $\dim_{\textup{rep}}(\beta)= 0$, we say that $\beta$ is \textit{approximately representable}.
\end{df}
Once again, we aim for a reformulation of the representability dimension that is more convenient for our purposes. To generalize unitary representations analogously to passing from unital $*$-homomorphisms to completely positive contractive order zero maps as in \cite[Theorem 3.3]{WinZac_completely_2009}, we introduce the notion of order zero representations; see \cite[Definition 1.7]{GarHirSan_rokhlin_2017}.

\begin{df}\label{order zero repr}
    Let $B$ be a \ca\, and let $G$ be a group. A map $v\colon G\to B$ is an \textit{order zero representation} if
    \begin{enumerate}
        \item $v_g$ is a normal contraction for all $g\in G$, and $v_e\in B_+$;
        \item $v_gv_h = v_ev_{gh}$ for all $g,h\in G$;
        \item $v_g^*=v_{g^{-1}}$ for all $g\in G$.
    \end{enumerate}
\end{df}
The following is \cite[Proposition 1.8]{GarHirSan_rokhlin_2017} and describes order zero representations as dilated unitary representations.

\begin{prop}\label{order zero repr map prop}
    Let $B$ be a \ca, let $G$ be a group, and let $v\colon G\to B$ be an order zero representation. Then there exist a projection $p\in B^{**}$ and a unitary representation $\pi\colon G\to \mathcal{U}(pB^{**}p)$ commuting with $v_e$, such that $v_g = v_e\pi_g$ for all $g\in G$. In particular, order zero representations of $G$ are in one-to-one correspondence with completely positive contractive order zero maps from $C^*(G)$. 
\end{prop}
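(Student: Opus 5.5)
We would prove this via the polar decomposition in the enveloping von Neumann algebra $B^{**}$.

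\emph{Modulus of $v_g$.} First we compute $|v_g|$. Condition (2) with $g=h=e$ is trivial. Combining (2) and (3) gives
\[v_g^*v_g = v_{g^{-1}}v_g = v_ev_e = v_e^2,\]
and likewise $v_gv_g^* = v_gv_{g^{-1}} = v_e^2$. Since $v_e\in B_+$, we get $|v_g| = v_e$ for every $g\in G$.

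\emph{Defining $p$ and $\pi_g$.} Let $p\in B^{**}$ be the support projection of $v_e$, i.e.\ the strong limit of $v_e^{1/n}$. For each $g$, let $v_g = u_g|v_g| = u_gv_e$ be the polar decomposition in $B^{**}$. The initial projection of $u_g$ is the support of $|v_g|$, which is $p$. The final projection is the support of $|v_g^*| = v_e$, which is again $p$. Hence $u_g$ is a unitary in $pB^{**}p$. Since $v_g$ is normal, its polar part commutes with $|v_g| = v_e$, so $v_g = u_gv_e = v_eu_g$. We set $\pi_g := u_g$, which commutes with $v_e$ by construction.

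\emph{Homomorphism property.} From $v_e = u_ev_e$ and uniqueness of the polar decomposition, $\pi_e = p$. For $g,h\in G$, condition (2) and the commutation relation give
\[u_{gh}v_e^2 = v_ev_{gh} = v_gv_h = u_gv_eu_hv_e = u_gu_hv_e^2.\]
Both $u_{gh}$ and $u_gu_h$ lie in $pB^{**}p$, and $p$ is also the support of $v_e^2$. Right multiplication by $v_e^2$ is therefore injective on $pB^{**}p$ (pass to the strong limit of $v_e^{2/n}$), so $u_{gh} = u_gu_h$. Together with $u_g^* = u_{g^{-1}}$, which follows from (3), this shows that $\pi$ is a unitary representation into $pB^{**}p$.

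\emph{The correspondence with order zero maps.} Given $v$, extend $\pi$ to a unital $*$-homomorphism $C^*(G)\to pB^{**}p$ and define $\ph(x) := v_e\pi(x)$. This is a positive contraction multiplied by a commuting $*$-homomorphism, so it is completely positive contractive and of order zero. Moreover $\ph(u_g) = v_g\in B$, so $\ph$ takes values in $B$ by linearity and density.

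Conversely, let $\ph\colon C^*(G)\to B$ be a completely positive contractive order zero map. By the structure theorem \cite[Theorem 3.3]{WinZac_completely_2009}, $\ph = h\,\pi_\ph(\cdot)$, where $h = \ph(1)$ and $\pi_\ph$ is a $*$-homomorphism commuting with $h$. Set $v_g := \ph(u_g)$ and check the axioms:
\begin{enumerate}[label=(\roman*)]
\item $v_g$ is a normal contraction and $v_e = h\in B_+$;
\item $v_g^* = \ph(u_g^*) = v_{g^{-1}}$;
\item $v_gv_h = h^2\pi_\ph(u_{gh}) = v_ev_{gh}$.
\end{enumerate}
The two assignments are mutually inverse, since $\ph$ is determined by its values on the generators $u_g$.

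The main technical point is the support and polar-decomposition bookkeeping: that the final projection of $u_g$ is exactly $p$ (this uses normality of $v_g$), and the cancellation of $v_e^2$ on $pB^{**}p$.
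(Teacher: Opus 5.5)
Your proof is correct and complete. The paper gives no argument of its own for this statement; it imports it directly from \cite[Proposition 1.8]{GarHirSan_rokhlin_2017}, so there is no in-paper proof to compare against. Your polar-decomposition argument in $B^{**}$ is a valid self-contained proof. It establishes $|v_g| = |v_g^*| = v_e$, identifies the support projection $p$ of $v_e$ as both the initial and final projection of the polar part, and cancels $v_e^2$ on $pB^{**}p$ to get multiplicativity. The passage to and from order zero maps via the universal property of $C^*(G)$ and \cite[Theorem 3.3]{WinZac_completely_2009} is also sound.

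Three minor remarks:
\begin{enumerate}[label=(\roman*)]
\item The normality hypothesis in the definition is redundant. Your computation $v_g^*v_g = v_gv_g^* = v_e^2$ from conditions (2) and (3) already shows each $v_g$ is normal, and this is what gives both the final projection $p$ and the commutation $\pi_g v_e = v_e\pi_g$.
\item You implicitly treat $G$ as discrete. This is necessary here, both for the canonical unitaries to lie in $C^*(G)$ and for extending an arbitrary unitary representation $\pi$ via the universal property. It is harmless, since the paper only uses finite groups.
\item You use $u_g$ both for the polar parts and for the canonical unitaries of $C^*(G)$. Renaming one of them would avoid confusion in the converse direction.
\end{enumerate}
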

This allows us to rephrase the representability dimension in terms of the order zero representations analogously to the global case as in \cite[Theorem 1.12 (2)]{GarHirSan_rokhlin_2017} with $v^{(j)} = \rho_j^2$ for all $j=0,\cdots,d$.
 
\begin{prop}\label{ar cen seq version}
    Let $B$ be a separable \ca, let $G$ be a group, and let $\beta = ((B_g)_{g\in G},(\beta_g)_{g\in G})$ be a partial action. For $d\in \N$, the inequality $\dim_{\textup{rep}}(\beta)\leq d$ is equivalent to the existence of completely positive contractive order zero maps $v^{(j)}\colon C^*(G) \to M(B)_\infty$, for $j=0,\cdots,d$, that satisfy
     \begin{enumerate}
        \item $v_g^{(j)}B_\infty \cup B_\infty v_g^{(j)}\subseteq (B_{g^{-1}})_\infty$ for all $g\in G$, and $j=0,\cdots,d$;
        \item $\beta^\infty_g(bv^{(j)}_h) = \beta^\infty_g(b)v_{ghg^{-1}}^{(j)}$ for all $j=0,\cdots,d$, $g,h\in G$, and $b\in (B_{g^{-1}})_\infty$;
        \item $\sum_{j=0}^d v_e^{(j)}=1$;
        \item $v_g^{(j)}b = \beta_g(b)v_g^{(j)}$ for all $j=0,\cdots,d$ and $g\in G,\,b\in B_{g^{-1}}\subseteq B_\infty$.
    \end{enumerate}
\end{prop}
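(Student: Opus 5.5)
The plan mirrors the proof of \autoref{part Rok cen seq version}: translate between $\varepsilon$-approximate elements and exact relations in the sequence algebra. We combine this with \autoref{order_zero_repr_map_prop} to identify order zero representations $G\to M(B)_\infty$ with completely positive contractive order zero maps $C^*(G)\to M(B)_\infty$. Throughout, we write $v^{(j)}_g$ for the image of the canonical unitary $u_g\in C^*(G)$. Under this correspondence, conditions (1)–(3) of \autoref{order zero repr} are exactly the algebraic relations $v_g^*=v_{g^{-1}}$, $v_gv_h=v_ev_{gh}$ and normality with $v_e\ge 0$. I expect the relevant group to be finite (implicitly, as in \autoref{ar dim eps version}), so that only finitely many relations must be controlled at once.

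For the forward direction, assume $\dim_{\textup{rep}}(\beta)\le d$. By separability, choose increasing finite sets $F_n\subseteq B$ with dense union, closed under the partial maps where needed, and take the elements $x^{(j)}_{g,n}$ supplied by \autoref{ar dim eps version} for $(F_n,2^{-n})$. Set $v^{(j)}_g:=[(x^{(j)}_{g,n})_n]\in (M(B))_\infty$; these are contractions lying in $(B_{g^{-1}})_\infty$. Conditions (i)–(v) only hold after multiplication by elements $b$ of $F_n$, i.e.\ strictly. So I would either pass to the quotient of $M(B)_\infty$ by the annihilator of $B$ (as the paper implicitly does, following \cite{GarHirSan_rokhlin_2017}), or read the relations after multiplying by $B_\infty$. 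In that quotient, the conditions translate as follows:
\begin{itemize}
\item (i) gives normality, and (ii) gives $v_gv_h=v_ev_{gh}$.
\item Taking $h=g^{-1}$ together with (v), and positivity of $x_e$, one needs to arrange $v_g^*=v_{g^{-1}}$. If this is not automatic, I would replace $x_g$ by a symmetrization such as $\tfrac12(x_g+x_{g^{-1}}^*)$, which preserves the other estimates up to $\varepsilon$.
\item (iii) gives condition (2), (iv) gives (3), and (v) gives (4).
\end{itemize}
Condition (1) holds because each $x^{(j)}_{g,n}\in B_{g^{-1}}$ and $B_{g^{-1}}$ is an ideal. Finally, \autoref{order zero repr map prop} turns each order zero representation $v^{(j)}$ into a completely positive contractive order zero map on $C^*(G)$.

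For the converse, start from the maps $v^{(j)}$. Apply \autoref{order zero repr map prop} to get order zero representations, and lift each $v^{(j)}_g$ to a bounded sequence in $M(B)$. By condition (1), after multiplying by an approximate unit of $B_{g^{-1}}$, the lift may be taken inside $B_{g^{-1}}$ while only changing things strictly negligibly. This is the same device as in \autoref{part Rok cen seq version}, where the lifts were arranged to lie in $A_g$. For $v_e$, use a positive lift via Choi–Effros, or simply take the positive part of a self-adjoint lift. Each exact relation (normality, multiplicativity, (2), (3), (4)) then holds in the limit after multiplication by any fixed $b\in B$. Since $F$ is finite and $G$ is finite, some single index $n$ satisfies all of (i)–(v) of \autoref{ar dim eps version} for $(F,\varepsilon)$ simultaneously.

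The main obstacle is the idealizer/strictness bookkeeping, in two places. First, we must ensure that lifts can be taken in $B_{g^{-1}}$ and remain contractions while still satisfying condition (v), which is a norm estimate and not only a strict one. Second, condition (2) involves $\beta^\infty_g$ applied to $bv_h$ with $b$ in the sequence ideal, which must be reduced to $b\in F$. For the first point, I would cut down by a quasicentral approximate unit of $B_{g^{-1}}$ that is approximately $\beta$-compatible, using separability and a diagonal argument.
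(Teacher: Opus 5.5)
Your plan is the paper's proof. In the forward direction you take witnesses for $(F_n,2^{-n})$, set $v^{(j)}_g=[(x^{(j)}_{g,n})_n]$, get (1) from $x^{(j)}_{g,n}\in B_{g^{-1}}$, read (i)--(ii) as the order zero representation relations and (iii)--(v) as (2)--(4) via strict convergence, and integrate with \autoref{order zero repr map prop}. In the converse you lift, arrange the lifts in $B_{g^{-1}}$ (positive at $g=e$), and pick one large index. Your cut-down of the lifts by an approximate unit of $B_{g^{-1}}$ is more careful than the paper. The paper simply asserts that Choi--Effros lifts can be taken with entries in $B_{g^{-1}}$. Since $\sum_{j} v^{(j)}_e=1$, that cannot hold literally when $B$ is non-unital, so a cut-down of the kind you describe is genuinely needed there.

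The one step you should discard is the symmetrization fallback $\tfrac12(x_g+x_{g^{-1}}^*)$, which does not preserve the estimates. Since $x_{g^{-1}}^*$ lies in $B_g$, the average leaves $B_{g^{-1}}$, and with it you lose (1). Moreover, unless $x_g^*\approx x_{g^{-1}}$ already holds, a sum of two normal elements need be neither normal nor satisfy (ii).

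The fallback is also unnecessary: for finite $G$ the adjoint axiom follows from the other two wherever the relations hold exactly.
\begin{itemize}
\item Taking $h=e$ gives $v_gv_e=v_ev_g$, and $v_{g^a}v_{g^b}=v_ev_{g^{a+b}}=v_{g^b}v_{g^a}$. By Fuglede's theorem, $v_e$ and the $v_{g^k}$ therefore generate a commutative $C^*$-algebra.
\item At a character, write $t\geq 0$ for the value of $v_e$ and $a_k$ for that of $v_{g^k}$. Then $a_ja_k=ta_{j+k}$.
\item If $t=0$, this gives $a_k^2=0$, so $a_k=0$. If $t>0$, it gives $a_k=t\zeta^k$ for a root of unity $\zeta$.
\item In both cases $a_{-k}=\overline{a_k}$, i.e.\ $v_{g^{-1}}=v_g^*$.
\end{itemize}
This is what the paper tacitly uses when it says that (i) and (ii) make $v^{(j)}$ an order zero representation.

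Finally, your alternative of reading the relations after multiplying by $B_\infty$ is not available. The estimates only involve constant $b\in F_n$, so you get relations against $B$ only. That is exactly what the paper's appeal to strict convergence provides.
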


\begin{proof}
    If $\dim_{\textup{rep}}(\beta)\leq d$, then by separability we may choose an increasing sequence of finite sets $F_n\subseteq B$ with dense union in $B$ and $x_{g,n}^{(j)}\in B_{g^{-1}}\subseteq M(B)$ for $(F_n, 2^{-n})$. Conditions (i) and (ii) ensure that the induced map $v^{(j)}\colon G\to M(B)_\infty$ given by $v^{(j)}_g:= [(x_{g,n}^{(j)})_{n\in \N}]$ for $g\in G$ is an order zero representation. It integrates to a completely positive contractive order zero map $v^{(j)}\colon C^*(G) \to M(B)_\infty$ by \autoref{order zero repr map prop} satisfying $(1)$ because of our choices for $x_{g,n}^{(j)}\in B_{g^{-1}}\subseteq M(B)$. Using strict convergence in $M(B)$, we have that (iii) in \autoref{ar dim eps version} implies $(2)$, that (iv) implies $(3)$, and finally that (v) implies $(4)$.\par Conversely, given completely positive contractive order zero maps $v^{(j)}\colon C^*(G) \to M(B)_\infty$ as above, $(2)$ combined with the Choi-Effros lifting theorem in \cite{ChoEff_lifting_1976} allows us to find completely positive contractive lifts $V^{(j)}\colon C^*(G) \to l^\infty(\N, M(B))$ such that, for all $g\in G$ and $n\in \N$, the entries $(V^{(j)}_g)_n$ are contractions in $B_{g^{-1}}$ with $(V^{(j)}_e)_n\in B_+$. For given $(F,\varepsilon)$, using the definition of an order zero representation as well as properties $(2)$ to $(4)$ of $v^{(j)}$ in the sequence algebra, there is $n\in \N$ such that the $(V^{(j)}_g)_n$ satisfy the properties (i) to (v) in \autoref{part Rok unital epsilon version}.
\end{proof}

\begin{rem}\label{dimrep zero global}
    Note that the first condition $(1)$ in \autoref{ar cen seq version} ensures that an approximately representable partial action $\beta = ((B_g)_{g\in G},(\beta_g)_{g\in G})$ is necessarily global. Indeed, the single completely positive contractive order zero map $v^{(0)}\colon C^*(G) \to M(B)_\infty$ is unital by $(3)$ and thus a $*$-homomorphism. It restricts to a unitary representation $v\colon G \to M(B)_\infty$ such that $v_gA_\infty \cup A_\infty v_g\subseteq (A_{g^{-1}})_\infty$ for all $g\in G$. Hence $A_\infty= v_gv_g^*A_\infty\subseteq v_g(A_{g})_\infty\subseteq (A_{g^{-1}})_\infty$. In particular, $A_g= A$ for all $g\in G$.
\end{rem}
We recall the concept of a partial representation; see \cite[Definition 2.2]{DokExePic_parrepr_2000}.

\begin{df}
    Let $G$ be a group. A \textit{partial representation} of $G$ on a unital \ca\, $B$ is a map $\pi\colon G\to B$ such that, for all $g,h\in G$, we have
    \begin{enumerate}
        \item $\pi(e) = 1_B$,
        \item $\pi(g^{-1}) = \pi(g)^*$,
        \item $\pi(g)\pi(h)\pi(h^{-1}) = \pi(gh)\pi(h^{-1})$.
    \end{enumerate}
\end{df}
Partial actions allow for the construction of full and reduced partial crossed products as completions of the section algebra for an associated Fell bundle over the group; see \cite[Section 11]{Exe_mono_2017}. Since we are dealing with finite groups, by \cite[Theorem 4.7]{Exe_amenFell_1997} both constructions coincide and can be described more easily in terms of the following algebra.

\begin{df}\label{crossed product def}
    Let $\alpha = ((A_g)_{g\in G},(\alpha_g)_{g\in G})$ be a partial action of a finite group $G$ on a \ca\, $A$. The associated \textit{partial crossed product} is the set $A \rtimes_\alpha G$ of formal linear combinations of $a_gu_g$, for $g\in G$ and $a_g\in A_g$, with convolution and involution given by
    \[(a_gu_g)(b_hu_h) :
        = \alpha_g(\alpha_{g^{-1}}(a_g)b_h)u_{gh} \andeqn (a_gu_g)^* := \alpha_{g^{-1}}(a_g^*)u_{g^{-1}}\]
    for all $g,h\in G,\,a_g\in A_g,\,a_h\in A_h$. A \textit{covariant representation} on a unital \ca\, $B$ is a pair $(\rho,\pi)$ consisting of a $*$-homomorphism $\rho\colon A\to B$ and a partial representation $\pi\colon G\to B$ such that $\rho(\alpha_g(x)) = \pi_g\rho(x)\pi_{g^{-1}}$ for all $g\in G$ and $x\in A_{g^{-1}}$. We turn the partial crossed product into a $C^*$-algebra by equipping it with the $C^*$-norm \[\bigg\|\sum_{g\in G}a_gu_g \bigg\| := \sup \bigg\{\bigg\|\sum_{g\in G}\rho(a_g)\pi_g \bigg\|\colon (\rho,\pi) \,\,\textup{is a covariant representation} \bigg\}.\]
\end{df}
Associativity of the partial crossed product is established in \cite[Proposition 2.4]{Exe_asspartcp_1997}.

\begin{lma}\label{u is partial implementing rep}
    Let $\alpha = ((A_g)_{g\in G},(\alpha_g)_{g\in G})$ be a partial action of a finite group $G$ on a \ca\, $A$. Then $u\colon G\to (A\rtimes_\alpha G)^{**}$ is a partial representation such that $(A_gu_e)u_g = u_g(A_{g^{-1}}u_e)$ and $u_gu_g^*$ is the projection onto $(A_gu_e)^{**}$. Furthermore, $u$ implements the partial action in the sense that $\alpha_g(x) = u_gxu_{g^{-1}}$ for all $g\in G$ and $x\in A_{g^{-1}}$.
\end{lma}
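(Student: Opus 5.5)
The plan is to construct $u_g$ concretely as a weak$^*$-limit in the bidual and verify every claim by computing with approximate units inside the dense $*$-algebra of finite sums $\sum_g a_gu_g$. For each $g\in G$, fix an approximate unit $(e^{(g)}_\lambda)_\lambda$ of the ideal $A_g$. The elements $e^{(g)}_\lambda u_g\in A\rtimes_\alpha G$ are bounded, so they have a weak$^*$-cluster point in $(A\rtimes_\alpha G)^{**}$. I would define $u_g$ to be this limit. To show that it exists and does not depend on the approximate unit, I would check that right and left multiplication by a fixed $b_hu_h$ converges in the appropriate sense.

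By the convolution formula,
\[(e^{(g)}_\lambda u_g)(b_hu_h)=\alpha_g\big(\alpha_{g^{-1}}(e^{(g)}_\lambda)b_h\big)u_{gh}.\]
Since $\alpha_{g^{-1}}$ maps approximate units of $A_g$ to approximate units of $A_{g^{-1}}$, the net $\alpha_{g^{-1}}(e^{(g)}_\lambda)b_h$ converges weak$^*$ in $A^{**}$ to $q_{g^{-1}}b_h$, where $q_{g^{-1}}$ is the support (unit) projection of $A_{g^{-1}}^{**}$. This limit is independent of the choice of approximate unit. Testing against functionals on the dense subalgebra, and using boundedness of the net together with separate weak$^*$-continuity of multiplication in the bidual, I would then obtain a well-defined $u_g$ and identify all products $u_g(b_hu_h)$ and $(b_hu_h)u_g$.

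With the construction in place, the claims are verified one at a time, always working first at the level of approximants. First, $u_e$ is the unit of $(A\rtimes_\alpha G)^{**}$, because $e_\lambda(a_gu_g)=e_\lambda a_gu_g\to a_gu_g$ in norm for an approximate unit of $A$. Second, $(e^{(g)}_\lambda u_g)^*=\alpha_{g^{-1}}(e^{(g)}_\lambda)u_{g^{-1}}$, and $\alpha_{g^{-1}}(e^{(g)}_\lambda)$ is an approximate unit of $A_{g^{-1}}$; independence of the choice then gives $u_g^*=u_{g^{-1}}$. Third, for $x\in A_{g^{-1}}$, the formula
\[(e^{(g)}_\lambda u_g)(xu_e)(\alpha_g(e^{(g)}_\mu)u_{g^{-1}})=e^{(g)}_\lambda\,\alpha_g(x)\,e^{(g)}_\mu u_e\]
converges in norm to $\alpha_g(x)$, which gives $u_gxu_{g^{-1}}=\alpha_g(x)$. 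The same computation with a general $a\in A_g$ shows $(A_gu_e)u_g=u_g(A_{g^{-1}}u_e)$, namely $au_g=u_g\alpha_{g^{-1}}(a)$. Fourth, $u_gu_g^*$ is the limit of $e^{(g)}_\lambda(e^{(g)}_\mu)$ viewed inside $A_gu_e$, hence equals the unit of $(A_gu_e)^{**}$, that is, the projection onto it.

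The remaining partial representation axiom is $u_gu_hu_{h^{-1}}=u_{gh}u_{h^{-1}}$. I would reduce both sides to the same element using the covariance relation already proved. One has $u_hu_{h^{-1}}=q_h$, and $u_gq_h=\alpha_g(q_h q_{g^{-1}})u_g$ in the bidual sense, which is the unit of $\alpha_g(A_{g^{-1}}\cap A_h)=A_g\cap A_{gh}$ times $u_g$. For the right-hand side, $u_{gh}u_{h^{-1}}$ is the limit of $e^{(gh)}_\lambda\,\alpha_{gh}(\alpha_{h^{-1}g^{-1}}(e^{(gh)}_\lambda)\,\cdot)$ computed via the convolution formula. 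Using that $\alpha_{gh}$ extends $\alpha_g\circ\alpha_h$ on $A_{h^{-1}}\cap A_{h^{-1}g^{-1}}$, this reduces to the same element $q_{A_g\cap A_{gh}}u_g$.

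I expect the main obstacle to be the analytic bookkeeping. Nets such as $\alpha_{g^{-1}}(e_\lambda)b_h$ need not converge in norm, so every step has to pass through weak$^*$-limits and separate continuity of multiplication. One must also make sure the limits defining $u_g$ are taken in an order that is legitimate, for instance by first establishing the identities on the dense subalgebra and then extending by boundedness. The algebraic identities themselves are routine once this framework is fixed.
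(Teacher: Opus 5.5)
Your proof is correct, but it takes a different route from the paper.

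\textbf{What the paper does.} It never constructs $u_g$. It quotes Lemma 6.2 of Dokuchaev--Exel for three facts: that $u$ is a partial representation, that $(A_gu_e)u_g=u_g(A_{g^{-1}}u_e)$, and that $u_gu_g^*$ is the support projection of $A_gu_e$. The only thing it proves is the implementation identity. It does this in one line, computing $(e_i^{(g)}u_g)(xu_{g^{-1}})=e_i^{(g)}\alpha_g(x)u_e$ and then removing the approximate unit.

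\textbf{What you do instead.} You define $u_g$ as the weak$^*$-limit of $e^{(g)}_\lambda u_g$ in $(A\rtimes_\alpha G)^{**}$. Uniqueness comes from the fact that two cluster points $x,x'$ with $xb=x'b$ for all $b\in A\rtimes_\alpha G$ must coincide, by separate weak$^*$-continuity. You then check every axiom by hand.

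\textbf{What each route buys.} Yours costs more bookkeeping. In return the lemma becomes self-contained, and the topology is pinned down. The paper speaks of strict convergence of $(e_i^{(g)}u_g)_i$. In general, however, $u_g$ is not a multiplier of $A\rtimes_\alpha G$: one has $u_g(bu_e)=\alpha_g^{**}(q_{g^{-1}}b)u_g$, which need not lie in $A\rtimes_\alpha G$. So your weak$^*$ description in the bidual is the precise one. The paper's route buys brevity by deferring the partial-representation identities to the literature.

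\textbf{Two small repairs.}

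First, in your third step the right-hand factor should be $\alpha_{g^{-1}}(e^{(g)}_\mu)u_{g^{-1}}$, which is what your second step gives for $u_{g^{-1}}$. As written, $\alpha_g(e^{(g)}_\mu)$ is undefined, since $e^{(g)}_\mu\in A_g$. With this correction the product is indeed $e^{(g)}_\lambda\alpha_g(x)e^{(g)}_\mu u_e$.

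Second, the partial-representation step is sketchy; make explicit what you use.
\begin{enumerate}[label=(\alph*)]
    \item $u_g=u_gq_{g^{-1}}$, which follows from $u_g^*u_g=u_{g^{-1}}u_{g^{-1}}^*=q_{g^{-1}}$.
    \item The weak$^*$-extension $u_gy=\alpha_g^{**}(y)u_g$ for $y\in A_{g^{-1}}^{**}$.
    \item The identities $\alpha_g(A_{g^{-1}}\cap A_h)=A_g\cap A_{gh}=\alpha_{gh}(A_{(gh)^{-1}}\cap A_{h^{-1}})$, which follow from the partial action axioms.
\end{enumerate}
Your formula for the right-hand side is garbled. It is cleanest as the iterated limit of $(e^{(gh)}_\lambda u_{gh})(f_\mu u_{h^{-1}})=\alpha_{gh}(\alpha_{(gh)^{-1}}(e^{(gh)}_\lambda)f_\mu)u_g$, where $(f_\mu)$ is an approximate unit of $A_{h^{-1}}$. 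Take the limit in $\mu$ first, then in $\lambda$. This yields the support projection of $A_g\cap A_{gh}$ times $u_g$, which matches your left-hand side.
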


\begin{proof}
    \cite[Lemma 6.2]{DokExe_asspartcp_2005} shows that $u\colon G\to (A\rtimes_\alpha G)^{**}$ is a partial representation such that $(A_gu_e)u_g = u_g(A_{g^{-1}}u_e)$ and $u_gu_g^*$ is the projection onto $(A_gu_e)^{**}$. Finally, for given $g\in G$, let $(e_i^{(g)})_i$ be a bounded approximate identity for the ideal $A_g$. Using the convolution identity in \autoref{crossed product def}, we compute $(e_i^{(g)}u_g)(xu_{g^{-1}}) = e_i^{(g)}\alpha_g(x)u_e$ for all $x\in A_{g^{-1}}$. Since $(e_i^{(g)}u_g)_i$ converges strictly to $u_g$, the bounded approximate identity can be omitted and this shows the claim.
\end{proof}
We conclude this section by recalling that partial crossed products are topologically graded algebras in the sense of \cite[Definition 3.4]{Exe_amenFell_1997}.

\begin{df}\label{top grading}
    Let $G$ be a finite group, let $B$ be a \ca, and let $B_g\subseteq B$ be closed linear subspaces for all $g\in G$. We say that the collection $(B_g)_{g\in G}$ is a \textit{topological grading} for $B$ if there exists a bounded linear map $E\colon B \to B_e$ that is the identity on $B_e$ and vanishes on $B_g$ for each $g\neq e$ and such that, for all $g,h\in G$, we have
    \begin{enumerate}
        \item $B_gB_h\subseteq B_{gh}$,
        \item $B_g^* = B_{g^{-1}}$,
        \item $\ov{\textup{span}}(\bigcup_{g\in G}B_g) = B$.
    \end{enumerate}
\end{df}
The following proposition is an immediate consequence of \cite[Proposition 3.2]{Exe_amenFell_1997} and \cite[Theorem 3.3]{Exe_amenFell_1997}.

\begin{prop}\label{cp grading prop}
    Let $\alpha = ((A_g)_{g\in G},(\alpha_g)_{g\in G})$ be a partial action of a finite group $G$ on a \ca\, $A$. Then $(A_gu_g)_{g\in G}$ is a topological grading for $A\rtimes_\alpha G$.
\end{prop}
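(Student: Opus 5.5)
The plan is to verify the three axioms of \autoref{top grading} for the subspaces $B_g := A_gu_g \subseteq A\rtimes_\alpha G$ directly, and then to produce the conditional expectation $E$. Throughout, $A\rtimes_\alpha G$ is the completion of the algebraic partial crossed product from \autoref{crossed product def}, and $A_gu_g$ denotes the image of $\{a_gu_g\colon a_g\in A_g\}$ in it.

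\textbf{The algebraic axioms.}
\begin{enumerate}
\item For $B_gB_h\subseteq B_{gh}$, I use the convolution formula. For $a_g\in A_g$ and $b_h\in A_h$, the element $\alpha_{g^{-1}}(a_g)b_h$ lies in $A_{g^{-1}}\cap A_h$, since $A_h$ is an ideal. Its image under $\alpha_g$ lies in $A_g\cap A_{gh}$, by the standard fact $\alpha_g(A_{g^{-1}}\cap A_h)=A_g\cap A_{gh}$ for partial actions. Hence the product lies in $A_{gh}u_{gh}$.
\item The involution formula gives $(a_gu_g)^*=\alpha_{g^{-1}}(a_g^*)u_{g^{-1}}\in A_{g^{-1}}u_{g^{-1}}$, so $B_g^*=B_{g^{-1}}$.
\item The linear span of the $B_g$ is the whole algebraic crossed product, which is dense by construction.
\end{enumerate}

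\textbf{Closedness of the $B_g$.} I need each $B_g$ to be closed, and I need the map $a\mapsto au_g$ to be isometric. For this I would use a covariant representation built from a faithful representation of $A$, namely a regular-type representation on $\ell^2(G,H)$ in the manner of \cite{Exe_amenFell_1997}. In it $\|\rho(a)\pi_g\|$ should equal $\|a\|$, because $\pi_g\pi_g^*$ acts as the unit on $\rho(A_g)$. Since $\|au_g\|\le\|a\|$ holds in every covariant representation, this gives isometry, and hence closedness.

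\textbf{The expectation $E$ (main obstacle).} I would define $E$ on finite sums by $E(\sum_g a_gu_g)=a_eu_e$. The hard part is boundedness, that is, $\|a_e\|\le\|\sum_g a_gu_g\|$ in the universal norm. My first approach is to realise $E$ spatially in the regular covariant representation. There $(\rho\times\pi)(x)$ acts on $\ell^2(G,H)$, and compressing to the $e$-th coordinate, $\langle\delta_e,\,\cdot\,\delta_e\rangle$, should recover $\rho(a_e)$. This gives $\|a_e\|\le\|x\|_{\mathrm{red}}\le\|x\|$. Since the reduced and universal norms agree for finite $G$ by \cite{Exe_amenFell_1997}, $E$ extends to a contraction that is the identity on $B_e$ and vanishes on $B_g$ for $g\neq e$.

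In fact these are exactly the contents of \cite[Proposition 3.2, Theorem 3.3]{Exe_amenFell_1997}, applied to the Fell bundle $(A_gu_g)_g$ associated to $\alpha$, so the proof can cite them once the bundle structure above is identified.
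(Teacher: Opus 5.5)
Your proposal is correct and follows the same route as the paper, whose entire proof is the citation of \cite[Proposition 3.2]{Exe_amenFell_1997} and \cite[Theorem 3.3]{Exe_amenFell_1997}; your direct checks of the algebraic axioms and your regular-representation sketch for the closedness of $A_gu_g$ and the boundedness of $E$ spell out what those results provide. One small simplification: the chain $\|a_e\|\le\|x\|_{\mathrm{red}}\le\|x\|$ already bounds $E$ in the universal norm, so you do not need the coincidence of reduced and full norms for finite $G$ at that step.
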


\section{Duality of partial Rokhlin dimension}\label{dual of pRok section}
In this section, we consider partial actions by finite abelian groups with finite Rokhlin dimension. We first import the terminology of Landstad duality and then introduce a dual approximate representability dimension for the dual action. Finally, we show that the dual approximate representability dimension of the dual action agrees with the Rokhlin dimension of the original action. We end this section with the dual characterization that the representability dimension as in \autoref{ar cen seq version} of a partial action by a finite abelian group agrees with the global Rokhlin dimension of the dual action as in \autoref{part Rok cen seq version}.\par
We begin by recalling the canonical dual action by the dual group on the partial crossed product. This is a special case of the theory of dual coactions.

\begin{df}\label{dual action}
    Let $G$ be abelian, and let $\alpha = ((A_g)_{g\in G},(\alpha_g)_{g\in G})$ be a partial action on a \ca\, $A$. The \textit{dual action} $\widehat{\alpha}\colon \widehat{G} \curvearrowright A \rtimes_\alpha G$ is given by
    \[ \widehat{\alpha}_\chi(a_gu_g) := \chi(g)a_gu_g\]
    for all $\chi\in \widehat{G},\,g\in G$ and $a_g\in A_g$.
\end{df}

\begin{prop}\label{fp of dual action}
    Let $G$ be abelian, and let $\alpha = ((A_g)_{g\in G},(\alpha_g)_{g\in G})$ be a partial action on a \ca\, $A$. Then $(A\rtimes_\alpha G)^{\widehat{\alpha}} = Au_e$.
\end{prop}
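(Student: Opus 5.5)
We prove the two inclusions separately. The inclusion $Au_e\subseteq (A\rtimes_\alpha G)^{\widehat{\alpha}}$ is immediate from \autoref{dual action}, since $\widehat{\alpha}_\chi(au_e)=\chi(e)au_e=au_e$ for every $\chi\in\widehat{G}$ and $a\in A=A_e$.

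For the reverse inclusion, we use the averaging map
\[E\colon A\rtimes_\alpha G\to A\rtimes_\alpha G,\qquad E(x):=\frac{1}{|\widehat{G}|}\sum_{\chi\in\widehat{G}}\widehat{\alpha}_\chi(x).\]
Here $G$ is implicitly finite, since the partial crossed product of \autoref{crossed product def} is only defined for finite $G$; hence $\widehat{G}$ is finite with $|\widehat{G}|=|G|$. The map $E$ is a contractive linear map, because each $\widehat{\alpha}_\chi$ is a $*$-automorphism. (That $\widehat{\alpha}_\chi$ extends isometrically to the completion follows from the universal norm: if $(\rho,\pi)$ is a covariant representation, then so is $(\rho,\chi\pi)$.) If $x$ is fixed by $\widehat{\alpha}$, then clearly $E(x)=x$.

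We then evaluate $E$ on the dense subspace of finite sums $\sum_g a_gu_g$. Using $\widehat{\alpha}_\chi(a_gu_g)=\chi(g)a_gu_g$, we get
\[E\Big(\sum_{g} a_gu_g\Big)=\sum_g\Big(\frac{1}{|\widehat{G}|}\sum_{\chi}\chi(g)\Big)a_gu_g=a_eu_e,\]
by the orthogonality relation $\sum_{\chi\in\widehat{G}}\chi(g)=0$ for $g\neq e$. This relation holds because for $g\neq e$ some character $\chi_0$ satisfies $\chi_0(g)\neq1$, and multiplying the sum by $\chi_0(g)$ only permutes its terms. So on formal sums $E$ coincides with projection onto the $e$-component, which lands in $Au_e$.

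Since $A\rtimes_\alpha G$ is, as described in \autoref{crossed product def}, the set of such finite sums, or in any case the closure of their span by \autoref{cp grading prop}, continuity of $E$ gives $E(A\rtimes_\alpha G)\subseteq\ov{Au_e}$. The remaining point, and the main one to check, is that $Au_e$ is closed. The map $a\mapsto au_e$ is an injective $*$-homomorphism $A\to A\rtimes_\alpha G$, and injective $*$-homomorphisms between $C^*$-algebras are isometric, so $Au_e$ is closed. Injectivity follows from the existence of the conditional expectation onto the $e$-fibre in \autoref{cp grading prop}, which is the identity on $A_eu_e$. It also follows from the existence of a faithful covariant representation, for instance the regular one. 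Therefore every fixed $x$ satisfies $x=E(x)\in Au_e$, which gives equality.
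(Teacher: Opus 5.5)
Your proof is correct, but it takes a different route from the paper's.

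The paper writes a fixed element as $\sum_g a_gu_g$ and notes that $\sum_g(\chi(g)-1)a_gu_g=0$ for every $\chi\in\widehat{G}$. It then appeals to uniqueness of coefficients in the reduced partial crossed product (the cited result from Exel's monograph, together with full equals reduced for finite $G$). This gives $(\chi(g)-1)a_g=0$, hence $a_g=0$ for $g\neq e$.

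You avoid that structural input entirely. By averaging over $\widehat{G}$ and using character orthogonality, you show directly that a fixed $x$ equals $E(x)=a_eu_e$. In the paper's notation, this amounts to summing its relations over $\chi$, which gives $-|G|\sum_{g\neq e}a_gu_g=0$ without ever separating the individual coefficients. The only analytic facts you need are:
\begin{enumerate}
\item each $\widehat{\alpha}_\chi$ is isometric on the completion, which you verify correctly via the covariant pair $(\rho,\chi\pi)$;
\item $Au_e$ is closed.
\end{enumerate}

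The comparison comes down to this. The paper's argument yields the slightly stronger information that each coefficient $a_g$ with $g\neq e$ vanishes, at the cost of citing a faithfulness result. Yours is self-contained. It would also carry over, with Haar integration over $\widehat{G}$ and your continuity-plus-closure step, to infinite discrete abelian $G$ and the full crossed product.

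One simplification: closedness of $Au_e$ does not need injectivity of $a\mapsto au_e$, because the range of any $*$-homomorphism between $C^*$-algebras is closed. Also, your first justification of injectivity (the expectation being the identity on $A_eu_e$) only works if that expectation is read as a left inverse into $A$ itself. The regular-representation argument is the one actually doing the work there.
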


\begin{proof}
    The $\supseteq$ inclusion is trivial. Conversely, any $\sum_{g\in G} a_gu_g \in (A\rtimes_\alpha G)^{\widehat{\alpha}}$ satisfies $\sum_{g\in G}(\chi(g)-1)a_gu_g = 0$ for all $\chi\in \widehat{G}$. By \cite[Proposition 17.9]{Exe_mono_2017}, for reduced partial crossed products, we have $\sum_{g\in G}(\chi(g)-1)a_gu_g=0$ if and only if $(\chi(g)-1)a_g=0$ for all $g\in G$, this forces $a_g=0$ for $g\neq e$ and shows the claim.
\end{proof}

Note that the dual action $\widehat{\alpha}$ is global even if $\alpha$ is partial. In the following, we want to recall the characterization of when a global action by $\widehat{G}$ is the dual of a partial action by $G$, that is, we want to highlight those actions $\beta \colon \widehat{G} \curvearrowright B$ that are of the form $\widehat{\alpha}\colon \widehat{G} \curvearrowright A \rtimes_\alpha G$ for some partial action $\alpha \colon G \curvearrowright A$.

\begin{df}\label{spectral subspaces}
    Let $G$ be abelian, and let $\beta \colon \widehat{G} \curvearrowright B$ be a global action on a \ca\, $B$. Let $g\in G$ and define the \textit{g-spectral subspace} as 
    \[ B_g := \{b\in B\colon \beta_\chi(b) = \chi(g)b \,\,\, \textup{for all}\,\,\, \chi \in \widehat{G}\}\] and define the \textit{g-spectral range ideal} as $D_g:= \ov{\textup{span}}(\{bc^*\colon b,c\in B_g\})\lhd B_e$. Let $p_g\in B^{**}$ be the projection onto $D_g^{**}$, and extend the notion of multipliers on the fixed point algebra $B^\beta= B_e$ consistently to general spectral subspaces by $M(B_g) := \{b\in p_gB^{**}p_{g^{-1}}\colon D_gb \cup bD_{g^{-1}} \subseteq B_g\}$.
\end{df}
If $B= A\rtimes_\alpha G$ is a partial crossed product and if $\beta = \widehat{\alpha}$ is the dual action, then the spectral subspaces $B_g=A_gu_g$ form the topological grading in \autoref{cp grading prop}. In this case, the spectral range ideals of the fixed point algebra are $D_g= A_gu_e$ and by \autoref{u is partial implementing rep}, the implementing partial representation satisfies $u_g\in M(B_g)$ and $u_gu_g^* = p_g$ for all $g\in G$.\par The following proposition is a version of Landstad duality by Quigg and Raeburn and agrees with \cite[Theorem 4.1]{QuiRae_Landstad_1997} in the special case of abelian groups and dual actions instead of dual coactions.

\begin{prop}\label{Landstad duality}
    Let $G$ be a finite abelian group and let $\beta \colon \widehat{G} \curvearrowright B$ be a global action on a \ca\, $B$. For $g\in G$, let $p_g$ and $M(B_g)$ be as in \autoref{spectral subspaces}. Then the following are equivalent:
    \begin{enumerate}
        \item There are a partial action $\alpha = ((A_g)_{g\in G},(\alpha_g)_{g\in G})$ and an equivariant isomorphism $(\beta,B)\cong (\widehat{\alpha},A\rtimes_\alpha G)$.
        \item There is a partial representation $m\colon G\to B^{**}$ such that for all $g\in G$ we have $m_g\in M(B_g)$ and $m_gm_g^* = p_g$.
    \end{enumerate} 
\end{prop}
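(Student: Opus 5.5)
The plan is to prove the two implications separately, working inside $B^{**}$ and using the grading structure.

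For (1) $\Rightarrow$ (2), I would transport the structure along the equivariant isomorphism $\Phi\colon A\rtimes_\alpha G\to B$ and its bidual extension $\Phi^{**}$. Equivariance gives $\Phi(A_gu_g)=B_g$, since both sides are the $g$-spectral subspaces. It then also gives $\Phi(A_gu_e)=D_g$, because $D_g$ is the closed span of $(A_gu_g)(A_gu_g)^*$, which is $A_gu_e$ by the convolution formula. By \autoref{u is partial implementing rep}, $u\colon G\to (A\rtimes_\alpha G)^{**}$ is a partial representation and $u_gu_g^*$ is the projection onto $(A_gu_e)^{**}$. It also satisfies $(A_gu_e)u_g=u_g(A_{g^{-1}}u_e)\subseteq A_gu_g$. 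So I would set $m_g:=\Phi^{**}(u_g)$. Then $m$ is a partial representation with $m_gm_g^*=p_g$ and $m_g\in p_gB^{**}p_{g^{-1}}$. We also get $D_gm_g\cup m_gD_{g^{-1}}\subseteq B_g$, hence $m_g\in M(B_g)$. This direction is essentially the remark preceding the proposition.

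For (2) $\Rightarrow$ (1), I would build the partial action on $A:=B_e=B^\beta$, with ideals $A_g:=D_g$ and maps $\alpha_g(x):=m_gxm_g^*$ for $x\in D_{g^{-1}}$. The steps run as follows.

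First, $\alpha_g$ maps $D_{g^{-1}}$ onto $D_g$. For $x=bc^*$ with $b,c\in B_{g^{-1}}$, the element $m_gb$ lies in $B_e$ because $B_gB_{g^{-1}}\subseteq B_e$ and $m_g$ multiplies $D_{g^{-1}}$ into $B_g$. Then $m_gbc^*m_g^*$ should be rewritten as a product $b'c'^*$ with $b',c'\in B_g$; I would obtain this by factoring $b=d\,b_0$ with $d\in D_{g^{-1}}$ via Cohen factorization. Since $m_g^*m_g=p_{g^{-1}}$, which follows from (2) with $g^{-1}$, the map $\alpha_g$ is a $*$-isomorphism with inverse $\alpha_{g^{-1}}$. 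The partial-action axiom (that $\alpha_{gh}$ extends $\alpha_g\circ\alpha_h$) follows from the partial representation identity $m_gm_hm_{h^{-1}}=m_{gh}m_{h^{-1}}$.

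Second, I would define $\Psi\colon A\rtimes_\alpha G\to B$ by $\Psi(a_gu_g):=a_gm_g$. This lands in $B_g$ since $a_g\in D_g$ and $m_g\in M(B_g)$. It respects convolution and involution by a computation using $m_gam_g^*=\alpha_g(a)$ and the partial representation relations, and it is clearly equivariant.

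Third, I would show $\Psi$ is bijective. For surjectivity, it suffices to show $D_gm_g$ is dense in $B_g$. Since $p_g$ is the unit of $D_g^{**}$ and $B_g=D_gB_g$ (factorization through $bb^*$), we can write $b=b\,p_{g^{-1}}=b\,m_g^*m_g$, and $bm_g^*\in B_gB_{g}^{*}$-type products lie in $D_g$. For injectivity, compare with the topological grading: $(B_g)$ is a grading of $B$ with conditional expectation $E=\frac{1}{|G|}\sum_\chi\beta_\chi$, and likewise for $A\rtimes_\alpha G$ by \autoref{cp grading prop}. On each graded piece, $\Psi$ is isometric, since $\|a m_g\|^2=\|a p_g a^*\|=\|aa^*\|$. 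The faithful conditional expectations (finite group, so everything is amenable and full equals reduced, \cite{Exe_amenFell_1997}) then give injectivity.

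I expect the main obstacle to be the first step: showing rigorously that $\alpha_g$ is well defined into $D_g$ and is onto, that is, manipulating elements of $B^{**}$ so that products like $m_gbc^*m_g^*$ land in the closed ideal $D_g$. My first attempt would use Cohen factorization in the Hilbert $D_g$–$D_{g^{-1}}$ bimodule $B_g$ to move $m_g$ past elements of $D_{g^{-1}}$.
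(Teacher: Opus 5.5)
Your argument is correct, but it is not the paper's route: the paper gives no proof of this proposition. It quotes the result as the finite abelian special case of Quigg--Raeburn's Landstad duality for partial crossed products \cite[Theorem 4.1]{QuiRae_Landstad_1997}, where actions of $\widehat{G}$ are replaced by coactions of an arbitrary discrete group $G$.

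Your (1)$\Rightarrow$(2) is essentially the remark preceding the proposition. Your (2)$\Rightarrow$(1) is a self-contained verification that exploits finiteness. The averaging maps $\frac{1}{|G|}\sum_\chi\overline{\chi(g)}\beta_\chi$ split $B$ as a finite direct sum $\bigoplus_g B_g$, so there are no completion or amenability issues. It also yields the explicit model $A=B^\beta$, $A_g=D_g$, $\alpha_g=\mathrm{Ad}(m_g)$, $\Psi(au_g)=am_g$, which is the identification the paper relies on later, e.g. in the proof of \autoref{Rok dim is AR dim of dual}. The citation buys generality (non-abelian and infinite discrete $G$); your argument buys transparency.

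A few places need tightening.
\begin{enumerate}
\item In the first step, your detour through $m_gb\in B_e$ only places the factors in $B_e$, not in $B_g$. It is cleaner to Cohen-factor $x=x_1x_2$ inside $D_{g^{-1}}$. Then $m_gxm_g^*=(m_gx_1)(m_gx_2^*)^*\in B_gB_g^*\subseteq D_g$.
\item For the partial-action axiom, right-multiplying $m_gm_hm_{h^{-1}}=m_{gh}m_{h^{-1}}$ by $m_h$ gives $m_gm_h=m_{gh}p_{h^{-1}}$. This yields $\alpha_g\alpha_h=\alpha_{gh}$ where both sides are defined. However, the domain inclusion also needs an argument: if $x\in D_{h^{-1}}$ and $\alpha_h(x)\in D_{g^{-1}}$, then $x\in D_{(gh)^{-1}}$. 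This uses that the $p_k$ are central in $B_e^{**}$ and that $B_e\cap p_kB_e^{**}=D_k$.
\item For surjectivity, the identity $b=bp_{g^{-1}}=bm_g^*m_g$ comes from $B_g=B_gD_{g^{-1}}$, not from $B_g=D_gB_g$.
\item $\Psi$ is bounded for the universal norm because $(\iota_{B_e},m)$ is a covariant pair into $B^{**}$, by the very definition of $\alpha_g$.
\item Injectivity needs no amenability. $\Psi$ intertwines the spectral projections and is isometric on each $A_gu_g$, since $\|am_g\|^2=\|ap_ga^*\|=\|aa^*\|$. Hence $\Psi(x)=0$ forces every homogeneous component of $x$ to vanish.
\end{enumerate}
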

For a finite abelian group $G$, the dual group $\widehat{G}\subseteq C(G)$ generates the Kronecker delta functions via $\delta_g = \frac{1}{|G|}\sum_{\chi\in \widehat{G}}\overline{\chi(g)}\chi$ for all $g\in G$. This observation together with \autoref{ar cen seq version} and the existence of spectral range ideals for global actions by abelian groups motivate the following refinement of representability dimension.

\begin{df}\label{drepr dim}
    Let $G$ be a finite abelian group and let $\beta \colon \widehat{G} \curvearrowright B$ be a global action on a \ca\, $B$. For $d\in \N$, we say that $\beta$ is \textit{dually approximately representable with d colors}, if there are completely positive contractive order zero maps $v^{(j)}\colon C^*(\widehat{G}) \to M(B)_\infty^{\beta^\infty}$ for $j=0,\cdots,d$ that satisfy
     \begin{enumerate}
        \item $(\sum_{\chi\in \widehat{G}}\overline{\chi(g)}v_\chi^{(j)}) B^\beta_\infty \cup B^\beta_\infty (\sum_{\chi\in \widehat{G}}\overline{\chi(g)}v_\chi^{(j)})\subseteq (D_g)_\infty$ for all $j=0,\cdots,d$ and $g\in G$;
        \item $\sum_{j=0}^d v_{1_G}^{(j)}=1$;
        \item $v_\chi^{(j)}b = \beta_\chi(b)v_\chi^{(j)}$ for all $j=0,\cdots,d$ and $\chi\in \widehat{G},\,b\in B\subseteq B_\infty$.
    \end{enumerate}
    If existent, the minimal such $d$ is the \textit{dual representability dimension} of $\beta$ and denoted by $\dim_{\widehat{\textup{rep}}}(\beta)$. If $\dim_{\widehat{\textup{rep}}}(\beta)=0$, we say that $\beta$ is \textit{dually approximately representable}.
\end{df}
Note that $(1)$ in \autoref{drepr dim} is an additional condition compared to \autoref{ar cen seq version} for a global action $\beta \colon \widehat{G} \curvearrowright B$ showing that $\dim_{\textup{rep}}(\beta) \leq \dim_{\widehat{\textup{rep}}}(\beta)$. A dually approximately representable action $\beta$ is not automatically the dual of a partial action $\alpha\colon G\curvearrowright A$, but even if it is, the inequality can be strict. The following example can be seen as the dual of \cite[Example 3.2]{AbaGefGar_parRok_2022}.

\begin{eg}\label{repr dual repr example}
    Write ${\Z}/{2\Z} = \{0,1\}$ additively and $\widehat{{\Z}/{2\Z}} = \{\pm\}$ multiplicatively. Consider $A_0= C_0(0,1)\oplus C_0(0,1)$ and $A_1 = C_0((0,1)\setminus\{\frac{1}{2}\})\oplus C_0((0,1)\setminus\{\frac{1}{2}\})$. For any $f\in C([0,1]/\Z)$, define the shift operator \[\sigma(f)(t + \Z) := f\left(t+ \frac{1}{2} + \Z \right)\] for all $t\in [0,1]$. The involution $\alpha_1\colon A_1\to A_1$ given by $\alpha_1(\xi,\eta):= (\sigma(\eta),\sigma(\xi))$ for all $\xi,\eta\in C_0((0,1)\setminus\{\frac{1}{2}\})$ defines a partial action $\alpha = ((A_g)_{g\in \Z/2\Z},(\alpha_g)_{g\in \Z/2\Z})$.\par
    We claim that its dual action $\widehat{\alpha}\colon \{\pm\}\curvearrowright A_0u_0 \oplus A_1u_1$ satisfies $\dim_{\textup{rep}}(\widehat{\alpha}) < \dim_{\widehat{\textup{rep}}}(\widehat{\alpha})$. First, note that $\widehat{\alpha}$ is approximately representable with unitaries $\widetilde{v}_+= 1= (\ind_{(0,1)},\ind_{(0,1)})\in M(A_0)_\infty$ and $\widetilde{v}_{-} = (-\ind_{(0,1)},\ind_{(0,1)})\in M(A_0)_\infty$.\par To reach a contradiction, assume that $\widehat{\alpha}$ is dually approximately representable as well, that is, there exist self-inverse unitaries $v_+,v_{-}\in M(A_0u_0)_\infty$ that satisfy
    \begin{enumerate}
        \item $(v_+ - v_-)(A_0u_0)_\infty \subseteq (A_1u_0)_\infty$;
        \item $v_+=1= (\ind_{(0,1)},\ind_{(0,1)})$;
        \item $v_-(a_0u_0+a_1u_1) = (a_0u_0-a_1u_1)v_-$ for all $a_0\in A_0,\, a_1\in A_1$.
    \end{enumerate}
    Choose a sequence of functions $(\xi_n,\eta_n)\in C_b(0,1)\oplus C_b(0,1)$ that represent $v_-$. Since both coordinates of elements in $A_1$ vanish at $\frac{1}{2}$, condition $(1)$ implies that \[\lim_{n\to \infty}\xi_n \left(\frac{1}{2}\right) = \lim_{n\to \infty}\eta_n\left(\frac{1}{2}\right) = 1.\] \par At the same time, for all $a_1\in A_1$, condition $(3)$ states that \[v_-a_1u_1 = -(a_1u_1)v_- = -\alpha_1^\infty(\alpha_1(a_1)v_-)u_1.\] Writing $a_1=(f,g)$ for $f,g\in C_0((0,1)\setminus\{\frac{1}{2}\})$, this means that \[(\xi_nf,\eta_ng)+\alpha_1((\sigma(g)\xi_n,\sigma(f)\eta_n)) = (\xi_nf+\sigma(\sigma(f)\eta_n),\eta_ng+\sigma(\sigma(g)\xi_n))\in A_1\] converges uniformly to zero as $n\to \infty$. In particular, for all $t\in (0,1)\setminus \{\frac{1}{2}\}$, we have
    \[\lim_{n\to \infty}\xi_n \left(t\right) = -\lim_{n\to \infty}\eta_n\left(t\right)\]
    contradicting the limit in the previous paragraph.
\end{eg}
This shows that the dual representability dimension does not always agree with the representability dimension. However, if $\beta$ is the dual of a global action, then condition $(1)$ in \autoref{drepr dim} is implied by $v_\chi^{(j)} \in M(B)_\infty^{\beta^\infty}$ for all $j=0,\cdots,d$ and $\chi\in \widehat{G}$ since $D_g=B^\beta$ for all $g\in G$. Thus we have $\dim_{\textup{rep}}(\widehat{\alpha}) = \dim_{\widehat{\textup{rep}}}(\widehat{\alpha})$ for global actions $\alpha\colon G\curvearrowright A$.\par The following proposition is the first duality result of this section.

\begin{prop}\label{Rok dim is AR dim of dual}
    Let $G$ be a finite abelian group, and let $\alpha = ((A_g)_{g\in G},(\alpha_g)_{g\in G})$ be a partial action  on a separable \ca\, $A$. Then $\dim_{\textup{Rok}}(\alpha) = \dim_{\widehat{\textup{rep}}}(\widehat{\alpha})$.
\end{prop}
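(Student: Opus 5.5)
Both inequalities go through the sequence-algebra reformulations: \autoref{part Rok cen seq version} for $\dim_{\textup{Rok}}(\alpha)$ and \autoref{drepr dim} for $\dim_{\widehat{\textup{rep}}}(\widehat{\alpha})$. Write $B = A\rtimes_\alpha G$. By \autoref{fp of dual action} we have $B^{\widehat{\alpha}} = Au_e \cong A$, and the spectral range ideals are $D_g = A_gu_e$. The dictionary between the two sides is Fourier transform on the finite abelian group $G$. Since $C(G)\cong C^*(\widehat{G})$ via $\chi\mapsto \chi$ and $\delta_g = \frac{1}{|G|}\sum_\chi \overline{\chi(g)}\chi$, a completely positive contractive order zero map $\ph\colon C(G)\to M(A)_\infty$ is the same as one $v\colon C^*(\widehat G)\to M(A)_\infty$. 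Under this identification $\ph(\delta_g) = \frac{1}{|G|}\sum_\chi\overline{\chi(g)}v_\chi$ and $v_\chi = \sum_g \chi(g)\ph(\delta_g)$, and we regard $M(A)_\infty$ as sitting inside $M(B)_\infty^{\widehat{\alpha}^\infty}$ via $a\mapsto au_e$.

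\emph{Step 1: matching the conditions that translate directly.} Condition (1) of \autoref{part Rok cen seq version} translates directly into condition (1) of \autoref{drepr dim}, since $D_g = A_gu_e$. Condition (2) ($\sum_j\ph^{(j)}(\ind_G)=1$) is condition (2) ($\sum_j v^{(j)}_{1_G} = 1$), because the trivial character is $\ind_G$. It remains to match the commutation with $A$ plus the equivariance (3) on the Rokhlin side with condition (3) on the dual side, namely $v_\chi b = \widehat{\alpha}_\chi(b) v_\chi$ for all $b\in B$.

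\emph{Step 2: the key computation.} Test condition (3) of \autoref{drepr dim} on $b = a u_e$ and on $b = x u_g$ separately, using the spectral decomposition of \autoref{cp grading prop}.
\begin{enumerate}
\item For $b = au_e$, condition (3) is exactly $v_\chi a = a v_\chi$, i.e.\ the image lies in $A'$.
\item For $b = xu_g$ with $x\in A_g$, condition (3) reads $v_\chi x u_g = \chi(g)\, x u_g v_\chi$. By \autoref{u is partial implementing rep} we have $u_g y = \alpha_g(y)u_g$ for $y\in A_{g^{-1}}$, extended to the sequence algebra by strict continuity together with condition (1), which ensures that the relevant products lie in the domains $(A_{g^{-1}})_\infty$.
\end{enumerate}
Expanding $v_\chi$ in terms of the $\ph(\delta_h)$ and comparing Fourier coefficients in $\chi$, condition (3) on $xu_g$ becomes
\[\ph(\delta_{h})\,x u_g = x u_g\, \ph(\delta_{g^{-1}h}),\]
i.e.\ $\alpha_g^\infty(\ph(\delta_{g^{-1}h})y)=\ph(\delta_h)\alpha_g(y)$, which is condition (3) of \autoref{part Rok cen seq version} with $\texttt{Lt}_g\delta_{g^{-1}h}=\delta_h$. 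The translation back and forth is a finite linear change of variables. Hence the same tuples $(\ph^{(j)})_j$ and $(v^{(j)})_j$ witness both dimension bounds, and equality follows.

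\emph{Main obstacle.} The delicate part is making sense of products like $\ph(\delta_h) x u_g$ and of $\widehat{\alpha}$-invariant multipliers. We need to know that $M(A)_\infty\cap A'$, viewed inside $(B^{**})_\infty$, consists of $\widehat{\alpha}^\infty$-fixed multipliers of $B_\infty$. We also need the converse: every element of $M(B)_\infty^{\widehat\alpha^\infty}$ whose products with $B^\beta_\infty$ land in $Au_e$-related spaces comes from $M(A)_\infty$, using $B^{\widehat\alpha}=Au_e$ and an averaging argument as in \autoref{fix rem}. For the direction from dual representability to Rokhlin dimension, I would first use that $v_\chi$ commutes with $Au_e$ and is fixed, so that it multiplies $(Au_e)_\infty$ into itself. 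Then, invoking condition (1) on each Fourier component together with an approximate unit of $A$, I would show that each $\ph(\delta_g)$ defines an element of $M(A)_\infty$ with products in $(A_g)_\infty$. This is where I expect most care to be needed.
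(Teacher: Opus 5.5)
Your proposal is correct and follows essentially the same route as the paper. Both arguments use the Fourier dictionary $C(G)\cong C^*(\widehat{G})$ and the identifications $M(A\rtimes_\alpha G)_\infty^{\widehat{\alpha}^\infty}\cong M(A)_\infty$ and $D_g=A_gu_e$; both then split the implementation condition for $\widehat{\alpha}$ into elements of $Au_e$ (giving the relative commutant) and elements of the spectral subspaces (giving the $\texttt{Lt}$-equivariance via the implementing partial representation $u$). The only difference is cosmetic: you compare Fourier coefficients on the delta functions, whereas the paper tests equivariance directly on characters $f=\chi$, using $\texttt{Lt}_g(\chi)=\overline{\chi(g)}\chi$ and left multiplication by $u_g$ on $(A_{g^{-1}})_\infty u_{g^{-1}}$.
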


\begin{proof}
    We aim to show that $\dim_{\textup{Rok}}(\alpha)\leq d$ if and only if $\dim_{\widehat{\textup{rep}}}(\widehat{\alpha})\leq d$. Firstly, by \autoref{part Rok cen seq version}, $\dim_{\textup{Rok}}(\alpha)\leq d$ is equivalent to the existence of completely positive contractive order zero maps $\ph^{(j)}\colon C(G) \to M(A)_\infty\cap A'$ for $j=0,\cdots,d$ as in \autoref{part Rok cen seq version}. We express these maps in terms of order zero representations. Recall that sending a character $\chi\in \widehat{G}\subseteq C(G)$ to its standard unitary defines the canonical Fourier isomorphism $C(G)\cong C^*(\widehat{G})$. Using this isomorphism and \autoref{order zero repr map prop}, any completely positive contractive order zero map $\ph\colon C(G) \to M(A)_\infty\cap A'$ defines an order zero representation $v^\ph\colon \widehat{G}\to M(A)_\infty\cap A'$ given by $v^\ph_\chi := \ph(\chi)$ for all $\chi\in \widehat{G}\subseteq C(G)$. Conversely, any order zero representation $v\colon \widehat{G}\to M(A)_\infty\cap A'$ defines a completely positive contractive order zero map $\ph^v\colon C(G) \to M(A)_\infty\cap A'$ given by $\ph^v(\delta_g) = \frac{1}{|G|}\sum_{\chi\in \widehat{G}}\ov{\chi}(g)v_\chi$ for all $g\in G$. These two constructions are mutually inverse to each other.\par
    Secondly, by \autoref{fix rem} and \autoref{fp of dual action}, we may identify $M(A)_\infty$ and $M(A\rtimes_\alpha G)_\infty^{\widehat{\alpha}^\infty}$ as well as $A_g$ and $D_g$ for all $g\in G$. Thus we have that $\dim_{\widehat{\textup{rep}}}(\widehat{\alpha})\leq d$ is equivalent to the existence of order zero representations $v^{(j)}\colon \widehat{G}\to M(A)_\infty$, for $j=0,\cdots,d$, that satisfy $(1)$ and $(2)$ as in \autoref{part Rok cen seq version} and 
    \[ v_\chi^{(j)}u_eb = \widehat{\alpha}_\chi(b)v_\chi^{(j)}u_e 
    \]
    for all $\chi\in \widehat{G},\,b\in A\rtimes_\alpha G,\, j=0,\cdots,d$. Note that this final equation for $b\in Au_e$ corresponds to the statement that the order zero representations commute with constant sequences in $Au_e$, that is, $v^{(j)}_\chi \in M(A)_\infty\cap A'$ for all $\chi\in \widehat{G}$ and $j=0,\cdots,d$. \par To sum up, both dimension inequality statements are equivalent to the existence of order zero representations $v^{(j)}\colon \widehat{G}\to M(A)_\infty\cap A'$, for $j=0,\cdots,d$, satisfying $(1)$ and $(2)$ as in \autoref{part Rok cen seq version} and one dimension specific additional property that we focus on next. It suffices to show that the property
    \begin{equation}\label{v implement alphdual}
        v_\chi^{(j)}xu_{g^{-1}} = \widehat{\alpha}_\chi(xu_{g^{-1}})v_\chi^{(j)}
    \end{equation} for all $\chi\in \widehat{G},\,g\in G\setminus\{e\},\, x\in A_{g^{-1}}$, and $j=0,\cdots,d$ is equivalent to the final property in \autoref{part Rok cen seq version} for $f=\chi \in \widehat{G}\subseteq C(G)$ and $\texttt{Lt}_g(\chi)= \ov{\chi(g)}\chi$. Spelled out, this second property reads as
    \begin{equation}\label{v alpha equivariance}
        \alpha^\infty_g(v^{(j)}_\chi x) = \ov{\chi}(g)\alpha_g(x)v_\chi^{(j)}
    \end{equation}
    for all $\chi\in \widehat{G},\,g\in G\setminus\{e\},\, x\in A_{g^{-1}}$, and $j=0,\cdots,d$. To show that the properties \eqref{v implement alphdual} and \eqref{v alpha equivariance} are equivalent, we fix $\chi\in \widehat{G},\, g\in G\setminus \{e\},\,x\in A_{g^{-1}}$, and $j\in \{0,\cdots,d\}$. By \autoref{u is partial implementing rep}, left multiplication \[u_g(-)\colon (A_{g^{-1}})_\infty u_{g^{-1}}\to (A_g)_\infty\] is a bijection. We claim that left multiplication by $u_g$ turns \eqref{v implement alphdual} into \eqref{v alpha equivariance}. Indeed, using \eqref{v implement alphdual} in the second step, we observe
    \[ \alpha^\infty_g(v^{(j)}_\chi x) = u_g(v_\chi^{(j)}x)u_{g^{-1}} = u_g\widehat{\alpha}_\chi(xu_{g^{-1}})v_\chi^{(j)} = \ov{\chi}(g)u_gxu_{g^{-1}}v_\chi^{(j)} = \ov{\chi}(g)\alpha_g(x)v_\chi^{(j)}.
    \] This finishes the proof.
\end{proof}
We end this section with a complementary duality result. We show that the representability dimension as in \autoref{ar cen seq version} of a partial action by a finite abelian group agrees with the global Rokhlin dimension of the dual action as in \autoref{part Rok cen seq version}.

\begin{prop}\label{dual rok and par repr dim}
    Let $G$ be a finite abelian group, and let $\alpha = ((A_g)_{g\in G},(\alpha_g)_{g\in G})$ be a partial action on a separable \ca\, $A$. Then $\dim_{\textup{Rok}}(\widehat{\alpha}) = \dim_{\textup{rep}}(\alpha)$.
\end{prop}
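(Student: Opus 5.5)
We will show that $\dim_{\textup{rep}}(\alpha)\leq d$ holds if and only if $\dim_{\textup{Rok}}(\widehat{\alpha})\leq d$, following the pattern of the proof of \autoref{Rok dim is AR dim of dual}. Put $B:=A\rtimes_\alpha G$. By \autoref{ar cen seq version}, the first inequality is equivalent to the existence of order zero maps $v^{(j)}\colon C^*(G)\to M(A)_\infty$ satisfying the relative idealizer condition $(1)$, the equivariance $(2)$, the partition of unity $(3)$, and the implementing relation $(4)$. Since $\widehat{\alpha}$ is global, \autoref{idealizer rem} and \autoref{part Rok cen seq version} say the second inequality is equivalent to the existence of order zero maps $\psi^{(j)}\colon C(\widehat{G})\to M(B)_\infty\cap B'$ with $\sum_j\psi^{(j)}(\ind)=1$ and $\widehat{\alpha}^\infty_\chi(\psi^{(j)}(f))=\psi^{(j)}(\texttt{Lt}_\chi f)$. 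Here we use that $\widehat{\alpha}_\chi(\psi(f)b)=\widehat\alpha_\chi(\psi(f))\widehat\alpha_\chi(b)$ and that $B$ is an essential ideal of $M(B)$.

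\textbf{Translation via Fourier.} The Fourier isomorphism $C(\widehat{G})\cong C^*(G)$ sends $g\in G\subseteq C(\widehat G)$, viewed as the evaluation character, to the canonical unitary. Combined with \autoref{order zero repr map prop}, it identifies each $\psi^{(j)}$ with an order zero representation $w^{(j)}\colon G\to M(B)_\infty\cap B'$, where $w^{(j)}_g=\psi^{(j)}(\ev_g)$. Since $\texttt{Lt}_\chi(\ev_g)=\ov{\chi(g)}\ev_g$, the Rokhlin equivariance becomes $\widehat{\alpha}^\infty_\chi(w_g^{(j)})=\ov{\chi(g)}w^{(j)}_g$. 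Up to a fixed convention for the sign of $\chi(g)$, this says exactly that $w^{(j)}_g$ lies in the $g^{-1}$-spectral subspace of $M(B)_\infty$.

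\textbf{The dictionary.} The key construction is as follows.
\begin{itemize}
\item Given $v^{(j)}$ as in \autoref{ar cen seq version}, set $w^{(j)}_g:=v^{(j)}_g u_{g^{-1}}$. Condition $(1)$ ($v_g\in$ idealizer into $(A_{g^{-1}})_\infty$) combined with \autoref{u is partial implementing rep} makes this a well-defined element of $M(B)_\infty$ in the $g^{-1}$-spectral subspace.
\item Conversely, given $w^{(j)}$, set $v^{(j)}_g:=w^{(j)}_g u_g$. Since $w_g$ has spectral degree $g^{-1}$, this lies in degree $e$, i.e.\ in $M(A)_\infty$ by \autoref{fix rem} and \autoref{fp of dual action}. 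Moreover $v_g=v_gu_{g^{-1}}u_g$ is supported on $p_{g^{-1}}$, which gives $(1)$.
\end{itemize}

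\textbf{Checking the axioms.} We then check that each remaining condition transfers.
\begin{itemize}
\item \emph{Partition of unity.} Condition $(3)$ corresponds to $\sum_j\psi^{(j)}(\ind)=1$, since $w_e=v_e$.
\item \emph{Order zero relations.} These transfer using the partial representation identities of $u$. In particular $w_g^*=u_gv_g^*=u_g v_{g^{-1}}$, and by $(2)$ with $h=g^{-1}$ this equals $v_{g^{-1}}u_g$, as needed. Likewise $w_gw_h=v_gu_{g^{-1}}v_hu_{h^{-1}}=v_g\alpha_{g^{-1}}(v_h\cdot)\cdots$, which reduces via $(2)$ and abelianness ($ghg^{-1}=h$) to $v_ev_{gh}u_{(gh)^{-1}}=w_ew_{gh}$.
\item \emph{Centrality.} The requirement $w_g\in B'$ splits into commutation with $Au_e$ and with $A_hu_h$. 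For $a\in A$ we have $w_ga=v_g u_{g^{-1}}a=v_g\alpha_{g^{-1}}(a)u_{g^{-1}}$, which equals $av_gu_{g^{-1}}$ by $(4)$ applied to $\alpha_{g^{-1}}(a)$. Commutation with $u_h$ (cut down by $A_h$) then amounts to $u_hv_gu_{h^{-1}}=v_g$ on the relevant corners, which is exactly the equivariance $(2)$ with $ghg^{-1}=g$.
\end{itemize}
The argument is symmetric, so each step reverses.

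\textbf{Main obstacle.} The delicate point is working honestly with the partial isometries $u_g$ inside $(B^{**})_\infty$ rather than in $M(B)_\infty$. One must show that $v_gu_{g^{-1}}$ actually multiplies $B_\infty$ into $B_\infty$, and that the corner projections $p_g$ coming from the domains $A_g$ match up, in the same way the bijection $u_g(-)\colon (A_{g^{-1}})_\infty u_{g^{-1}}\to(A_g)_\infty$ was used before. Here $(1)$ of \autoref{ar cen seq version} is essential: as in \autoref{dimrep zero global}, it is what forces the supports to fit. I would handle this by always testing products against elements $au_h$ with $a\in A_h$, and using approximate units of $A_g$ as in the proof of \autoref{u is partial implementing rep}, so that all identities are computed inside $B_\infty$ itself.
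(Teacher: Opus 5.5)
Your proposal is correct and follows the paper's own route. The paper also rewrites the Rokhlin towers of $\widehat{\alpha}$ as central order zero representations $w^{(j)}$ sitting in spectral subspaces. It then uses the dictionary $w^{(j)}_g=v^{(j)}_{g^{-1}}u_g$ and checks, as you do, that the order zero relations transfer and that centrality of $w^{(j)}_g$ corresponds to the implementation and fixed-point conditions on $v^{(j)}$. Your different Fourier convention ($u_g\mapsto \ev_g$ rather than $\ov{\ev}_g$) only turns this into $w_g=v_gu_{g^{-1}}$ in degree $g^{-1}$, which is a harmless relabeling $g\leftrightarrow g^{-1}$ because $G$ is abelian.
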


\begin{proof}
    We aim to show that $\dim_{\textup{Rok}}(\widehat{\alpha})\leq d$ if and only if $\dim_{\textup{rep}}(\alpha)\leq d$ by first rephrasing $\dim_{\textup{Rok}}(\widehat{\alpha})\leq d$ in terms of order zero representations \[w^{(j)}\colon G\to M(A\rtimes_\alpha G)_\infty\cap (A\rtimes_\alpha G)'\] for $j=0,\cdots,d$, and then verifying that those representations in question are precisely of the form $w^{(j)}_g = v_{g^{-1}}^{(j)}u_g$, for $g\in G$ and $j=0,\cdots,d$, with order zero representations $v^{(j)}$ as in the reformulation of $\dim_{\textup{rep}}(\alpha)\leq d$ in \autoref{ar cen seq version}.\par Since the dual action is global, by \autoref{idealizer rem} and \autoref{part Rok cen seq version}, the inequality $\dim_{\textup{Rok}}(\widehat{\alpha})\leq d$ is equivalent to the existence of $(\texttt{Lt},\widehat{\alpha}^\infty)$-equivariant completely positive contractive order zero maps $\ph^{(j)}\colon C(\widehat{G}) \to M(A\rtimes_\alpha G)_\infty\cap (A\rtimes_\alpha G)'$ for $j=0,\cdots,d$ such that $\sum_{j=0}^d \ph^{(j)}(\ind_{\widehat{G}}) = 1$. Note that the maps $\ph^{(j)}$ are determined by their values on the generating set $\widehat{\widehat{G}} \subseteq C(\widehat{G})$. The group isomorphism $G\cong \widehat{\widehat{G}}$ given by $g\mapsto \ov{\textup{ev}}_g$ determines the concrete Fourier isomorphism $C^*(G)\cong C(\widehat{G})$ given by $u_g\mapsto \ov{\textup{ev}}_g$. Since the images of the canonical unitaries under a completely positive contractive order zero map form an order zero representation by \autoref{order zero repr map prop}, the maps $\ph^{(j)}$ can equivalently be described in terms of order zero representations $w^{(j)}\colon G\to M(A\rtimes_\alpha G)_\infty\cap (A\rtimes_\alpha G)'$. Concretely, we have that $w^{(j)}_g := \ph^{(j)}(\ov{\ev}_g)$ for all $j=0,\cdots,d$ and $g\in G$ as well as $\ph^{(j)}(\delta_\chi) = \frac{1}{|G|}\sum_{g\in G}{\chi}(g)w_g^{(j)}$ for all $j=0,\cdots,d$ and $\chi\in \widehat{G}$. In particular, $w^{(j)}_e = \ph^{(j)}(\ind_{\widehat{G}})$. We would like to express the Rokhlin dimension purely in terms of these order zero representations now. Since $\texttt{Lt}_\tau(\ov{\ev}_g(\chi)) = \ov{\ev}_g(\ov{\tau}\chi) = \tau(g)\ov{\chi(g)}$, the equivariance condition is equivalent to $\widehat{\alpha}_\chi^\infty(w_g^{(j)}) = \chi(g)w_g^{(j)}$ for all $\chi\in \widehat{G},\,g\in G$, and $j=0,\cdots,d$.\par Since $(A_gu_g)_{g\in G}$ is a topological grading for $A\rtimes_\alpha G$ by \autoref{cp grading prop}, the multiplier algebra inherits a corresponding topological grading $(M(A_gu_g))_{g\in G}$ consisting of spectral subspace multiplier algebras as in \autoref{spectral subspaces}. Fix $j\in \{0,\cdots,d\}$, $g\in G$, and representing sequences $b_h =(b_{h,n})_{n\in \N} \in \ell^\infty(\N,M(A_hu_h))$ such that $w_{g}^{(j)} = \sum_{h\in G}(b_h + c_0(\N,M(A_hu_h)))$. Then $\widehat{\alpha}^\infty_\chi(\sum_{h\in G}(b_h + c_0(\N,M(A_hu_h)))) = \chi(g)\sum_{h\in G}(b_h + c_0(\N,M(A_hu_h)))$ implies that $(\chi(h)-\chi(g))b_h \in c_0(\N,M(A_hu_h))$ for all $\chi \in \widehat{G}$ and $h\in G$. In particular, we have $b_h \in c_0(\N,M(A_hu_h))$ for all $h\in G\setminus\{g\}$ and thus $w_{g}^{(j)} = b_{g}+ c_0(\N, M(A\rtimes_\alpha G))$. Conversely, any order zero representation with $w^{(j)}_g\in M(A_gu_g)_\infty$ for all $g\in G$ clearly satisfies the desired equivariance condition. To sum up, $\dim_{\textup{Rok}}(\widehat{\alpha})\leq d$ is equivalent to the existence of order zero representations $w^{(j)}\colon G\to M(A\rtimes_\alpha G)_\infty\cap (A\rtimes_\alpha G)'$ for $j=0,\cdots,d$ such that $w^{(j)}_g\in M(A_gu_g)_\infty$ and $\sum_{j=0}^d w_e^{(j)}= 1.$ This concludes the first reformulation.\par For the second reformulation, let $\alpha$ be approximately representable with $d$ colors and order zero representations $v^{(j)}\colon G\to M(A^\alpha)_\infty$ for $j=0,\cdots,d$. We claim that $w^{(j)}_g := v^{(j)}_{g^{-1}}u_g$ for $g\in G$ and $j=0,\cdots,d$ define order zero representations $w^{(j)}\colon G\to M(A\rtimes_\alpha G)_\infty\cap (A\rtimes_\alpha G)'$ as in the first reformulation of Rokhlin dimension in the previous paragraph. Furthermore, the relation $v^{(j)}_{g^{-1}} = w^{(j)}_{g}u_{g^{-1}}$ for $g\in G$ and $j=0,\cdots,d$ allows to reconstruct the maps $v^{(j)}$ from the maps $w^{(j)}$.\par We begin by showing that, under the claimed correspondence, maps $v^{(j)}\colon G\to M(A^\alpha)_\infty$ with $v_g^{(j)}A_\infty \cup A_\infty v_g^{(j)}\subseteq (A_{g^{-1}})_\infty$ for all $j=0,\cdots,d$ and $g\in G$ are order zero representations if and only if the corresponding maps $w^{(j)}\colon G\to M(A\rtimes_\alpha G)_\infty$ with $w^{(j)}_g\in M(A_gu_g)_\infty$ for all $g\in G$ are order zero representations. Indeed, for fixed $g\in G$ and $j\in\{0,\cdots,d\}$, by definition of the fixed point algebra in \autoref{partial action} and \autoref{fix rem}, $v_g^{(j)}\in M(A^\alpha)_\infty$ means that for all $h\in G$ and $a_{h}\in (A_{h})_\infty$, we have
    \[ \alpha_{h^{-1}}^\infty(a_hv_g^{(j)}) = \alpha_{h^{-1}}^\infty(a_h)v_g^{(j)}.\] In particular, the assumption $v_g^{(j)}A_\infty \subseteq (A_{g^{-1}})_\infty$ allows us to deduce the identity $\alpha_{g^{-1}}^\infty(v^{(j)}_g) = v_g^{(j)}p_g$. Hence $(v^{(j)}_g)^* = v^{(j)}_{g^{-1}}$ holds if and only if 
    \[(w^{(j)}_g)^* = (v^{(j)}_{g^{-1}}u_g)^*= \alpha_{g^{-1}}^\infty((v^{(j)}_{g^{-1}})^*)u_{g^{-1}} = \alpha_{g^{-1}}^\infty(v^{(j)}_g)u_{g^{-1}} = v^{(j)}_gu_{g^{-1}} = w^{(j)}_{g^{-1}}\] holds. Further note that, under this assumption on the adjoint, the conditions that $v^{(j)}_{g^{-1}}u_g$ should be a normal contraction and that $v_e^{(j)}u_e$ should be positive are equivalent to $v^{(j)}_g$ being a normal contraction and $v^{(j)}_e$ being positive. Finally, for fixed $g,h\in G$ and $j\in\{0,\cdots,d\}$, we compute
    \[\alpha_g^\infty(\alpha_{g^{-1}}^\infty(v_{g^{-1}}^{(j)})v_{h^{-1}}^{(j)}) = \alpha_g^\infty(\alpha_{g^{-1}}^\infty(v_{g^{-1}}^{(j)}))v_{h^{-1}}^{(j)} = v_{g^{-1}}^{(j)}v_{h^{-1}}^{(j)}.\] 
    Thus, the condition $v_{g^{-1}}^{(j)}v_{h^{-1}}^{(j)} =v_e^{(j)}v_{(gh)^{-1}}^{(j)}$ is equivalent to
    \[ w^{(j)}_gw^{(j)}_h = \alpha_g^\infty(\alpha_{g^{-1}}^\infty(v_{g^{-1}}^{(j)})v_{h^{-1}}^{(j)})u_{gh} = v_{g^{-1}}^{(j)}v_{h^{-1}}^{(j)}u_{gh}= v_e^{(j)}v_{(gh)^{-1}}^{(j)}u_{gh} = w^{(j)}_ew^{(j)}_{gh}.\]
    This shows the claim that the maps $w^{(j)}$ are order zero representations if and only if the maps $v^{(j)}$ are.\par It remains to relate the implementation condition on $v_g^{(j)}\in M(A^\alpha)_\infty$ to the relative commutant condition on $w_g^{(j)}\in M(A\rtimes_\alpha G)_\infty\cap (A\rtimes_\alpha G)'$. Assume that conjugation by $v_g^{(j)}\in M(A)_\infty^{\alpha^\infty}$ implements $\alpha_g$ for all $g\in G$ and $j\in\{0,\cdots,d\}$. By using the implementation assumption in the fifth step and the fixed point assumption in the sixth step, we compute for all $g,h\in G$ and $a_hu_h\in A_hu_h\subseteq (A\rtimes_\alpha G)_\infty$ that \begin{align*}
        w^{(j)}_{g}a_hu_h &= \alpha_g^\infty(\alpha_{g^{-1}}^\infty(v^{(j)}_{g^{-1}})a_h)u_{gh} \\ 
        &= \alpha_g^\infty(\alpha_{g^{-1}}^\infty(v^{(j)}_{g^{-1}})a_h p_g)u_{gh} \\
         &= \alpha_g^\infty(\alpha_{g^{-1}}^\infty(v^{(j)}_{g^{-1}}))\alpha_g^\infty(a_h p_g)u_{gh} \\
        &= v^{(j)}_{g^{-1}}\alpha_g(a_hp_g)u_{gh} \\
        &= a_hp_gv^{(j)}_{g^{-1}}u_{hg} \\
        &= \alpha_h^\infty(\alpha_{h^{-1}}(a_h)v^{(j)}_{g^{-1}})u_{hg} \\
        &= a_hu_hw^{(j)}_{g}.
    \end{align*} Conversely, for fixed $g\in G$ and $j\in\{0,\cdots,d\}$, a direct inspection of the multiplication law in \autoref{crossed product def} shows that $w_g^{(j)}$ commutes with $\alpha_{g^{-1}}(a)u_e$ for $a\in A_g$ if and only if $\alpha_g^\infty(\alpha_{g^{-1}}^\infty(v_{g^{-1}})\alpha_{g^{-1}}(a))u_g = \alpha_{g^{-1}}(a)v_{g^{-1}}^{(j)}u_g$. By \autoref{u is partial implementing rep}, right multiplication \[(-)u_g\colon (A_g)_\infty \to (A_{g}u_g)_\infty\] is a bijection. Thus, the commutation relation is equivalent the implementation relation $v_{g^{-1}}^{(j)}a = \alpha_{g^{-1}}(a)v_{g^{-1}}^{(j)}$. Likewise, for fixed $g,h\in G$ and $j\in\{0,\cdots,d\}$, we have that if $w_{g^{-1}}^{(j)}$ commutes with $u_{h^{-1}}$, then 
    \begin{align*}
        \alpha_h^\infty(xv^{(j)}_g) &= \alpha_h^\infty(xw^{(j)}_{g^{-1}}u_g)\\
        &= u_hx(w^{(j)}_{g^{-1}}u_g)u_{h^{-1}}\\
        &= u_hxu_{h^{-1}}(w^{(j)}_{g^{-1}}u_g)\\
        &= \alpha_h^\infty(x)w^{(j)}_{g^{-1}}u_g\\
        &= \alpha_h^\infty(x)v^{(j)}_g
    \end{align*} for all $x\in (A_{h^{-1}})_\infty$. Thus $v^{(j)}_{g}\in M(A)_\infty^{\alpha^\infty}$. This concludes the second reformulation of approximate representability with $d$ colors. The featuring order zero representations $v^{(j)}\colon G\to M(A^\alpha)_\infty$ for $j=0,\cdots,d$ are of the desired form if and only if $w^{(j)}_g = v^{(j)}_{g^{-1}}u_g$ for $g\in G$ and $j=0,\cdots,d$ define order zero representations $w^{(j)}\colon G\to M(A\rtimes_\alpha G)_\infty\cap (A\rtimes_\alpha G)'$ as in the first reformulation of Rokhlin dimension at most $d$.\par Combining both reformulations shows $\dim_{\textup{rep}}(\alpha)\leq d$ if and only if $\dim_{\textup{Rok}}(\widehat{\alpha})\leq d$, as claimed. 
\end{proof}

\begin{cor}\label{dual Rok forces predual global AR}
    Let $G$ be a finite abelian group, and let $\alpha = ((A_g)_{g\in G},(\alpha_g)_{g\in G})$ be a partial action on a separable \ca\, $A$. Then $\widehat{\alpha}$ has the Rokhlin property if and only if $\alpha$ is global and approximately representable.
\end{cor}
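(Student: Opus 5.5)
The plan is to combine \autoref{dual rok and par repr dim} with \autoref{dimrep zero global}. By \autoref{dual rok and par repr dim}, $\widehat{\alpha}$ has the Rokhlin property, that is $\dim_{\textup{Rok}}(\widehat{\alpha})=0$, if and only if $\dim_{\textup{rep}}(\alpha)=0$. In other words, this holds exactly when $\alpha$ is approximately representable in the sense of \autoref{ar dim eps version}, for the partial action $\alpha$ itself. So the corollary reduces to the following claim: a partial action of a finite group on a separable \ca\ is approximately representable if and only if it is global and approximately representable.

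The \emph{if} direction is immediate. A global action is a particular partial action with $A_g=A$ for all $g$, and \autoref{ar dim eps version} then specializes to the usual definition of approximate representability. Hence $\dim_{\textup{rep}}(\alpha)=0$.

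For the \emph{only if} direction I would invoke \autoref{dimrep zero global}. By \autoref{ar cen seq version}, $\dim_{\textup{rep}}(\alpha)=0$ yields a single completely positive contractive order zero map $v^{(0)}\colon C^*(G)\to M(A)_\infty$ with $v^{(0)}_e=1$. A unital order zero map is a $*$-homomorphism, so each $v_g:=v^{(0)}_g$ is a unitary. Condition $(1)$ of \autoref{ar cen seq version} gives $v_gA_\infty\subseteq (A_{g^{-1}})_\infty$ and $v_g^*A_\infty\subseteq (A_g)_\infty$. Therefore
\[A_\infty=v_gv_g^*A_\infty\subseteq v_g(A_g)_\infty\subseteq (A_{g^{-1}})_\infty .\]
Restricting to constant sequences, this forces $A_{g^{-1}}=A$: if $a\in A$ has distance $\delta>0$ from the closed ideal $A_{g^{-1}}$, then the constant sequence $(a)_n$ stays at distance $\delta$ from $(A_{g^{-1}})_\infty$. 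Hence $\alpha$ is global.

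The only point needing care is this last step, namely that $A_\infty\subseteq(A_{g^{-1}})_\infty$ implies $A\subseteq A_{g^{-1}}$. I would justify it with the distance argument above, or alternatively by noting that the quotient map $A\to A/A_{g^{-1}}$ extends entrywise to sequence algebras.
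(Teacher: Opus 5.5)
Your proof is correct and follows the paper's argument. The paper combines \autoref{dual rok and par repr dim}, which gives $\dim_{\textup{Rok}}(\widehat{\alpha})=0$ if and only if $\dim_{\textup{rep}}(\alpha)=0$, with \autoref{dimrep zero global}, which shows that $\dim_{\textup{rep}}(\alpha)=0$ forces $\alpha$ to be global; your distance argument with constant sequences also makes explicit the step from $A_\infty\subseteq (A_{g^{-1}})_\infty$ to $A_{g^{-1}}=A$, which the remark only asserts with ``in particular''.
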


\begin{proof}
   Combine \autoref{dimrep zero global} and \autoref{dual rok and par repr dim}.
\end{proof}

\end{document}